\title{No two starlikes have equal index}
\author[E. R. Oliveira]{Elismar R. Oliveira}\email{\tt elismar.oliveira@ufrgs.br}
\address{UFRGS - Universidade Federal do Rio Grande do Sul, Instituto de Matem\'atica, Porto Alegre, Brazil}
\author[D. Stevanovi\'c]{Dragan Stevanovi\'c}\email{dragance106@yahoo.com}
\author[V. Trevisan]{Vilmar Trevisan} \email{\tt trevisan@mat.ufrgs.br}
\address{UFRGS - Universidade Federal do Rio Grande do Sul, Instituto de Matem\'atica, Porto Alegre, Brazil}
\newtheorem{exemplo}{Example}
\newtheorem{thm}{Theorem}
\newtheorem{prop}{Proposition}
\newtheorem{lema}{Lemma}
\begin{document}
\begin{abstract}
The index of a graph is the largest eigenvalue of its adjacency matrix. A starlike is a tree having a unique vertex of degree $r>2$. We show how to order the starlike trees with $n>3$ by their indices. In particular, the index of starlike trees are all distinct.
\end{abstract}

\maketitle
\noindent {\bf Keywords:} starlike tree; index; lexicographical order;

\noindent {\bf MSC:} 05C50; 05C05

\section{Introduction}

A starlike is a tree with a unique vertex of degree $r\geq 3$. It may be seen as a graph having a central vertex attached to $r$ paths with $y_1,\ldots, y_r$, respectively (see Figure \ref{slike} for an example). In spite of its apparent simplicity, import spectral properties of this class of graphs have been derived in many  studies.

Recall that the spectrum of a graph $G$ having $n$ vertices is the (multi)set of the eigenvalues of its adjacency matrix. We traditionally order them so that $$ \lambda_1 \geq \lambda_2 \geq \cdots \geq \lambda_n.$$
The index of the graph $G$ is $\lambda_1$, the largest eigenvalue of $G$, which is positive if $G$ is not the empty graph.

When defining starlike tree in 1979, Watanabe and Schwenk \cite{watanabe1979} studied the integrality of such trees, that is, they characterized starlike trees for which all the eigenvalues are integral values. In \cite{LEPOVIC2002}, Lepovi\'c and Gutman showed that starlikes are characterized by their spectrum, meaning that no two (non isomorphic) starlikes have the same spectrum.
In \cite{Patuzzi}, Patuzzi, de Freitas and Del-Vecchio studied the integrality of the index of starlike trees.


In this note, we show that two non isomorphic starlike trees having $n>3$ vertices have different indices. In fact our result is stronger than this: given $n>3$ we can order all non isomorphic starlike trees with $n$ vertices by their indices.

To explain how our result is proven, we set the following notation.
For a starlike tree with $n\geq 4$ vertices and $r\geq 3$ paths $P_{y_1},\ldots , P_{y_r}$, each one with $y_i$ vertices, the usual notation is $S(y_1,\ldots,y_r)$. Notice that by setting $y_1\leq y_2 \leq \cdots \leq y_r$, with $y_1+\cdots + y_r=n-1$, each distinct set of numbers $[y_1,y_2,\ldots ,y_r]$ gives a non isomorphic starlike tree with $n$ vertices. But recall that each such an $r$-tuple is a \emph{partition of $n-1$ in $r$ parts}. Henceforth, we are going to refer the partition $[y_1,y_2,\ldots ,y_r]$ of $n-1$ of $r$ parts and the starlike $S(y_1,\ldots,y_r)$ with $r$ paths indistinctly. Figure \ref{slike} shows the starlike $[1,2,3,3,5]$ having $n=14$ vertices and 5 paths.

\begin{figure}[h]
\begin{center}
  \begin{tikzpicture}[scale=.6, transform shape]
													
	\tikzstyle{every node} = [circle, fill=black]
	\node(c) at (0,0){c}; 
	
	\node(r11) at (-2,1){};
	\node(r21) at (-1,1){};
    \node(r01) at (0,1){};
	\node(r31) at (1,1){};
	\node(r41) at (2,1){};
		
	
	\node(r22) at (-1,2){};
	\node(r32) at (1,2){};
    \node(r02) at (0,2){};   
	\node(r42) at (2,2){};
	
	
	\node(r04) at (0,3){};
	\node(r34) at (1,3){};
	\node(r44) at (2,3){};
	
	
    \node(r45) at (2,4){};	
	\node (r55) at (2,5){};
								
	\foreach \from/\to in {c/r11,c/r21,c/r01,c/r31,c/r41,r21/r22,r01/r02,r02/r04,r31/r32,r32/r34,r41/r42,r42/r44,r44/r45,r45/r55} 
	\draw[line width=1] (\from) -- (\to); 
  \end{tikzpicture}
\end{center}
\caption{\label{slike}The starlike $[1,2,3,3,5]$.}
\end{figure}

Seeing the starlikes with $n$ vertices as the set of partitions, allows one to order them in \emph{lexicographical} order (see next section for definitions). What we actually show is that the order of the indices of the starlikes has the same lexicographical order of the partitions.

The paper is organized as follows. In section \ref{sec:tool} we explain the main tool used and show how to compare the indices of two starlike trees. Next, in section \ref{sec:order}, we define and study the lexicographical order of partitions. In particular, we show how to obtain the successor of a given partition. Finally, in section \ref{sec:mainresult} we prove the main result.

\section{The main tool}\label{sec:tool}

In this section, we describe our main tool to prove the result. The starting poing is the algorithm due to Jacobs and Trevisan~\cite{Jac11} that can be used to estimate the eigenvalues of a given tree.  It counts the number of eigenvalues of the adjacency matrix of a tree $T$ lying in any real interval. The algorithm is based on the diagonalization of the matrix $A(T)+ \alpha I$, where $A(T)$ is the adjacency matrix of $T$ and $\alpha$ is a real number. One of the main features of this algorithm is that it can be executed directly on the tree, so that the adjacency matrix is not needed explicitly. The algorithm is given in Figure \ref{treealgo}.

\begin{figure}[h]
{\tt
\begin{tabbing}
aaa\=aaa\=aaa\=aaa\=aaa\=aaa\=aaa\=aaa\= \kill
     \> Input: tree $T$, scalar $\alpha$ \\
     \> Output: diagonal matrix $D$ congruent to $A(T) + \alpha I$ \\
     \> \\
     \>  Algorithm $\mbox{Diagonalize}(T, \alpha)$ \\
     \> \> initialize $d(v) := \alpha$, for all vertices $v$ \\
     \> \> order vertices bottom up \\
     \> \> {\bf for } $k = 1$ to $n$ \\
     \> \> \> {\bf if} $v_k$ is a leaf {\bf then} continue \\
     \> \> \> {\bf else if} $d(c) \neq 0$ for all children $c$ of $v_k$ {\bf then} \\
     \> \> \>  \>   $d(v_k) := d(v_k) - \sum \frac{1}{d(c)}$, summing over all children of $v_k$ \\
     \> \> \> {\bf else } \\
     \> \> \> \> select one child $v_j$ of $v_k$ for which $d(v_j) = 0$  \\
     \> \> \> \> $d(v_k)  := -\frac{1}{2}$ \\
     \> \> \> \> $d(v_j)  :=  2$ \\
     \> \> \> \> if $v_k$ has a parent $v_l$, remove the edge $v_k v_l$. \\
     \> \>  {\bf end loop} \\
\end{tabbing}
}
\caption{\label{treealgo} Diagonalizing $A + \alpha I$.}
\end{figure}

It is worth noticing that the diagonal elements of the output matrix correspond precisely to the values $a(v)$ on each node $v$ of the tree. The following is the result we are going to use.

\begin{lema}[Jacobs and Trevisan~\cite{Jac11}]
\label{mainA}
Let $T$ be a tree and let $D$ be the diagonal matrix produced by the algorithm \verb+Diagonalize+$(T, -\alpha)$. The following assertions hold.
\begin{itemize}
\item[(a)]
The number of positive entries in $D$ is the number of eigenvalues
of $T$ that are greater than $\alpha$.
\item[(b)]
The number of negative entries in $D$ is the number of eigenvalues
of $T$ that are smaller than~$\alpha$.
\item[(c)]
If there are $j$ zero entries in $D$, then $\alpha$ is an eigenvalue
of $T$ with multiplicity~$j$.
\end{itemize}
\end{lema}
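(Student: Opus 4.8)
The plan is to realize the algorithm as symmetric Gaussian elimination performed on the matrix $M = A(T) - \alpha I$, which is the matrix associated with the call \verb+Diagonalize+$(T,-\alpha)$, whose stated output is congruent to $A(T) + (-\alpha)I = A(T) - \alpha I$, and then to invoke Sylvester's Law of Inertia. Every elementary operation used in symmetric Gaussian elimination, namely adding a scalar multiple of one row to another together with the mirror column operation, is a congruence $M \mapsto E^\top M E$, so the final diagonal matrix $D$ is congruent to $M$. By Sylvester's Law of Inertia, congruent real symmetric matrices share the same number of positive, negative and zero eigenvalues. Since the eigenvalues of $M = A(T) - \alpha I$ are exactly the shifts $\lambda_i - \alpha$, the three clauses follow at once by counting: $D$ has a positive entry for each $\lambda_i - \alpha > 0$, a negative entry for each $\lambda_i - \alpha < 0$, and a zero entry for each $\lambda_i = \alpha$, which reads off as (a), (b) and (c). Thus the entire content of the lemma is the congruence claim $D \cong M$.

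To establish that congruence I would argue by induction along the bottom-up (post-order) vertex ordering, maintaining the invariant that, after a vertex has been processed, the rows and columns indexed by the already-processed vertices have been diagonalized with the recorded values $d(\cdot)$, and that the only surviving off-diagonal entry linking a processed vertex to the remainder of the matrix is the $\pm 1$ joining a vertex to its not-yet-processed parent. The decisive structural fact is that in a tree a child $c$ of $v_k$ has no neighbour among the unprocessed vertices other than $v_k$ itself; consequently, using the diagonal pivot $d(c)$ to clear the symmetric pair of $1$'s on the edge $\{c,v_k\}$ produces no fill-in anywhere except at the diagonal entry of $v_k$, which is decremented by exactly $1/d(c)$. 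This is precisely the update $d(v_k) := d(v_k) - \sum 1/d(c)$ of the first branch, and it is a genuine congruence because clearing the off-diagonal $1$'s with the nonzero pivot $d(c)$ is a symmetric row/column operation.

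The delicate point, and the step I expect to be the main obstacle, is the second branch, triggered when some child $v_j$ satisfies $d(v_j) = 0$ so that it cannot serve as a pivot. Here the relevant $2 \times 2$ submatrix on $\{v_j, v_k\}$ is $\left(\begin{smallmatrix} 0 & 1 \\ 1 & d(v_k)\end{smallmatrix}\right)$, which has determinant $-1$ and is therefore congruent to a diagonal matrix carrying exactly one positive and one negative entry; the particular representatives $d(v_k) := -\tfrac12$ and $d(v_j) := 2$ are obtained by an explicit congruence using the off-diagonal $1$ as pivot. What must be verified is that this same congruence simultaneously clears the coupling of $v_k$ to its parent $v_l$: adding the appropriate multiple of the $v_j$-row, together with the mirror column operation, to the $v_l$-row annihilates the entry linking $v_k$ and $v_l$, which is exactly what the instruction \emph{remove the edge $v_k v_l$} records, and no other fill-in appears because $v_j$ is otherwise isolated among the unprocessed vertices. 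Checking that these operations compose into a single congruence and that the submatrix on the remaining unprocessed vertices is left untouched completes the induction; the inertia count via Sylvester's Law then yields (a), (b) and (c).
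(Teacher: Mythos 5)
The paper offers no proof of this lemma at all: it is quoted verbatim from Jacobs and Trevisan~\cite{Jac11}, so the only meaningful comparison is with the argument in that reference --- and your reconstruction is, in substance, exactly that argument. Realizing \texttt{Diagonalize}$(T,-\alpha)$ as symmetric Gaussian elimination performed congruently on $A(T)-\alpha I$, using the tree structure (a processed child couples to nothing unprocessed except its parent) to rule out fill-in, treating the singular-pivot branch via the determinant~$-1$ block $\left(\begin{smallmatrix} 0 & 1\\ 1 & d(v_k)\end{smallmatrix}\right)$, and concluding with Sylvester's Law of Inertia is precisely how the cited source proves the statement; your sign bookkeeping for the shift $-\alpha$ and the reading of the inertia of $A(T)-\alpha I$ as the counts in (a)--(c) are both correct.

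One verification in your zero-pivot branch is incomplete as written. When the else-branch fires at $v_k$, the congruence must clear not only the coupling of $v_k$ to its parent $v_l$, but also the couplings of $v_k$ to its \emph{other} processed children $c$, whose rows still carry a $1$ in column $v_k$ by your own invariant; the algorithm leaves those children's diagonal values $d(c)$ untouched, so these entries must be annihilated without disturbing any $d(c)$ --- in particular you cannot pivot on $d(c)$ itself, since that would alter the diagonal at $v_k$. The same device you use for the parent handles all of them: the $v_j$-row is zero except for the single $1$ in column $v_k$, so subtracting it (with the mirror column operation) from each such row $c$ kills the $(c,v_k)$ entry, and the diagonal at $c$ is unchanged because $M_{v_j,c}=0$ ($v_j$ and $c$ are nonadjacent in the tree) and $M_{v_j,v_j}=0$. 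This also settles the case of several children with value zero: only the selected $v_j$ is repriced to $2$, while the remaining zero-valued children keep diagonal $0$ after their edge to $v_k$ is cleared, and they correctly feed into the nullity count of part (c). With that patch your induction closes and the proof is complete, matching the original.
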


To illustrate how the algorithm performs, we look at an example. Consider the starlike tree $T=S(1,3,3)$ with 8 vertices and let us apply the algorithm Diagonalize($T$,-1), where the root is chosen to be the only of vertex of degree 3. We initialize all vertices with -1 (left of Figure~\ref{FigExample}) . We process the vertices from the leaves towards the root. The middle vertices of each of the paths $P_3$ become 0, after processing them. In order to process the last vertices of the $P_3$'s, we look at their children, that have value 0. Hence the algorithm requires to assign the value $-\frac{1}{2}$ for them, $2$ for the original vertices that were zero, and eliminate the edge between them and its parent (in this case, the root), see center Figure \ref{FigExample}. Now the only remaining vertex to process is the root, that currently has a single children, whose value is -1. Processing it, results in a value 0. The vertices have the final values $(-1)$ 3 times, $-\frac{1}{2}$ twice, 0 once and 2 twice. This shows there are 5 eigenvalues smaller than 1, 1 is an eigenvalue  of multiplicity 1 and there are 2 eigenvalues larger than 1. The right of Figure \ref{FigExample} shows the final values.
\begin{figure}[h]
\begin{center}
  \begin{tikzpicture}[scale=.6, transform shape]
													
	\tikzstyle{every node} = [circle, fill=gray]
	\node[label=left:-1](c) at (0,0){}; 
	
	\node[label=left:-1](r21) at (-1,1){};
    \node[label=left:-1](r01) at (0,1){};
	\node[label=right:-1](r31) at (1,1){};
		
	
	\node[label=right:-1](r32) at (1,2){};
    \node[label=left:-1](r02) at (0,2){};
	
	
	\node[label=left:-1](r04) at (0,3){};
	\node[label=right:-1](r34) at (1,3){};

								
	\foreach \from/\to in
{c/r21,c/r01,c/r31,r31/r32,r32/r34,r01/r02,r02/r04} 
	\draw[line width=1] (\from) -- (\to); 

	\node[label=left:-1](c) at (4,0){}; 
	
	\node[label=left:-1](r21) at (3,1){};
    \node[label=left:-1](r01) at (4,1){};
	\node[label=right:-1](r31) at (5,1){};
		
	
	\node[label=right:0](r32) at (5,2){};
    \node[label=left:0](r02) at (4,2){};
	
	
	\node[label=left:-1](r04) at (4,3){};
	\node[label=right:-1](r34) at (5,3){};
								
	\foreach \from/\to in
{c/r21,c/r01,c/r31,r31/r32,r32/r34,r01/r02,r02/r04} 
	\draw[line width=1] (\from) -- (\to); 

	\node[label=left:-1](c) at (8,0){}; 
	
	\node[label=left:-1](r21) at (7,1){};
    \node[label=left:\tiny{-1/2}](r01) at (8,1){};
	\node[label=right:\small{-1/2}](r31) at (9,1){};
		
	
	\node[label=right:2](r32) at (9,2){};
    \node[label=left:2](r02) at (8,2){};
	
	
	\node[label=left:-1](r04) at (8,3){};
	\node[label=right:-1](r34) at (9,3){};
								
	\foreach \from/\to in
{c/r21,r31/r32,r32/r34,r01/r02,r02/r04} 
	\draw[line width=1] (\from) -- (\to); 

  \end{tikzpicture}

\end{center}
\caption{\label{FigExample}The starlike $S[1,3,3]$.}
\end{figure} 

Consider now a starlike tree $T=S(y_1,\ldots, y_r)=[y_1,\ldots, y_r]$. Let $\lambda=-\lambda_1(T)$, where $\lambda_1(T)$ is the index of $T$, i.e. the largest eigenvalue of $T$. After applying the algorithm Diagonalize($T, \lambda$), Theorem \ref{mainA} tells us that all values will be negative, except for one value that must be 0. We observe that the value 0 can only happen while processing the last vertex,  the root, otherwise there would be positive values.

Consider a path $P_{y_{j}}$ of $T$. The sequence $a_k$ for $k = 1,\ldots y_j-1$ given by the recurrence equation $$a_1=\lambda \text{ and } a_{k+1}= a_1 - \frac{1}{a_{k}}=\lambda - \frac{1}{a_{k}},$$ define the values appearing in each vertex of the path $P_{y_j}$. After processing all paths of $T$, we process the root. The value at root is
$$\lambda - \frac{1}{a_{y_1}} - \cdots - \frac{1}{a_{y_r}}.$$
We know that this values has to be 0, hence we have the following equation for $\lambda$.
\begin{equation}\label{ais}
  \lambda = \frac{1}{a_{m_1}} + \cdots + \frac{1}{a_{m_r}}.
\end{equation}

We reformulate this as the following result.
\begin{thm}\label{formulacao}
 Let $0 < y_1\leq y_2 \leq \cdots \leq y_r$ and $r \geq 3$ be positive integers. Let $T$ be the starlike $S(y_1,\ldots,y_r)$. For a given $\lambda$, define the recurrence relation
$b_{k}=\frac{1}{a_{k}}$ for $k=1,2, \ldots $ satisfying
\begin{equation}\label{bis1}
b_1=\frac{1}{\lambda} \text{ and } b_{k+1}= \frac{1}{\lambda - b_{k}}.
\end{equation}
If $\lambda=-\lambda_1(T)$, where $\lambda_1(T)$ is the index of $T$, then
\begin{equation}\label{bis}
 b_{y_1}+ \cdots + b_{y_r}=\lambda.
\end{equation}
\end{thm}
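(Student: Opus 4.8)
The plan is to read \eqref{bis} directly off the execution of \verb+Diagonalize+$(T,\lambda)$ with $\lambda=-\lambda_1(T)$, using Lemma~\ref{mainA} to control the signs of the output. First I would match notations: since the scalar in Lemma~\ref{mainA} appears as $-\alpha$, taking $\alpha=\lambda_1(T)$ means we run \verb+Diagonalize+ with scalar $\lambda=-\lambda_1(T)$. Because $T$ is a connected nonempty graph, $\lambda_1(T)>0$ is a simple eigenvalue by Perron--Frobenius, and no eigenvalue exceeds it. Hence Lemma~\ref{mainA} guarantees that the output diagonal matrix has \emph{no positive entry}, \emph{exactly one zero entry}, and $n-1$ negative entries.

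Next I would follow the algorithm along a single path $P_{y_j}$ attached to the central vertex. Its leaf is initialized to $\lambda\neq 0$, and as long as the \textbf{if} branch applies, the successive diagonal values obey $a_{k+1}=\lambda-1/a_k$, so that $b_k:=1/a_k$ satisfies $b_{k+1}=1/(\lambda-b_k)$ with $b_1=1/\lambda$, which is precisely \eqref{bis1}. The one thing that can break this recurrence is a zero value $a_k=0$ at a non-root vertex: when its parent is processed, the \textbf{else} branch fires and writes the positive value $2$ into the diagonal. Since there is no positive entry, the \textbf{else} branch is never executed; by induction along each path this forces $a_1,\dots,a_{y_j}\neq 0$, so every $b_k$ for $k\le y_j$ is well defined and equals the diagonal value of the corresponding path vertex.

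Then I would locate the unique zero. The tree has one central vertex and $y_1+\cdots+y_r=n-1$ path vertices; by the previous step all $n-1$ path vertices carry nonzero, hence (by the sign count) negative, values, so the single zero entry must sit at the root. Processing the root through the \textbf{if} branch gives its value as $\lambda-\sum_{j=1}^r 1/a_{y_j}=\lambda-\sum_{j=1}^r b_{y_j}$, and setting this equal to $0$ yields \eqref{ais}, equivalently $b_{y_1}+\cdots+b_{y_r}=\lambda$, which is \eqref{bis}.

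The main obstacle, and the only genuinely nonroutine point, is the argument that the \textbf{else} branch never fires. This is exactly where the spectral input is used: that $\lambda_1(T)$ is the \emph{largest} eigenvalue rules out positive diagonal entries, and the observation that the \textbf{else} branch always emits the positive value $2$ converts ``no positive entries'' into ``no zero value on the paths.'' Simplicity of $\lambda_1(T)$ then pins the unique zero at the root. Everything else is a direct transcription of the recurrence, so once this sign bookkeeping is in place the identity \eqref{bis} follows immediately.
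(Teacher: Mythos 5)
Your proposal is correct and follows essentially the same route as the paper: the paper also reads \eqref{bis} off the run of \texttt{Diagonalize}$(T,\lambda)$ with $\lambda=-\lambda_1(T)$, invoking Lemma~\ref{mainA} to conclude that all diagonal values are negative except a single zero, observing that a zero occurring before the root would force positive entries (the \textbf{else} branch writing $2$), and then setting the root value $\lambda-\sum_j 1/a_{y_j}$ to zero. Your write-up is in fact slightly more careful than the paper's, since you make explicit the Perron--Frobenius input (simplicity and maximality of $\lambda_1$) and the induction showing the \textbf{else} branch never fires, both of which the paper leaves implicit.
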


This next lemma defines properties of the recurrence relation $b_i$ given by (\ref{bis1}) that are essential to our proofs.

\begin{lema} \label{b_n properties} Let $T=[y_1,\ldots,y_r]$ be a starlike. Consider the sequences $\{b_i\}_{i=1,2,\ldots,y_r}$ associated with $\lambda=-\lambda_1(T)$. Then
\begin{itemize}
 \item[(a)] $b_i <0$, for $i=1,\ldots,y_r$, and is decreasing;
 \item[(b)] For any $1 \leq \theta \leq y_r$ we have $\displaystyle \lambda > b_{\theta}+\frac{1}{b_{\theta}}$.
\end{itemize}
\end{lema}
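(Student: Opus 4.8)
The plan is to reduce both assertions to a single inequality, namely that the quadratic $g(x) = x^2 - \lambda x + 1$ satisfies $g(b_i) > 0$ for every $i = 1, \dots, y_r$, and then to prove that inequality by controlling the orbit of the map $f(x) = \frac{1}{\lambda - x}$ that generates the sequence.

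First I would record the elementary consequences of the recurrence (\ref{bis1}). Since $\lambda = -\lambda_1(T) < 0$, we have $b_1 = 1/\lambda < 0$. Moreover, by Lemma \ref{mainA} together with the discussion preceding Theorem \ref{formulacao}, running \texttt{Diagonalize}$(T,\lambda)$ produces only negative diagonal entries except for a single $0$ at the root; as $a_1, \dots, a_{y_r}$ are non-root entries along the longest path, each $a_i < 0$, and hence $b_i = 1/a_i < 0$ for all $i \le y_r$. This settles the sign claim in (a).

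Next I would isolate the role of $g$. A direct computation gives $b_{i+1} - b_i = \frac{g(b_i)}{\lambda - b_i}$, and since $\lambda - b_i = a_{i+1} < 0$ for $i \le y_r - 1$, the monotonicity in (a) is equivalent to $g(b_i) > 0$. Likewise $\lambda - \left(b_\theta + \frac{1}{b_\theta}\right) = -\frac{g(b_\theta)}{b_\theta}$, and since $b_\theta < 0$, statement (b) is also equivalent to $g(b_\theta) > 0$. Thus everything hinges on proving $g(b_i) > 0$ for $1 \le i \le y_r$.

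Finally I would prove $g(b_i) > 0$ by cases on the discriminant $\lambda^2 - 4$. If $\lambda^2 < 4$ then $g$ has no real root and is everywhere positive, so there is nothing more to do. If $\lambda^2 \ge 4$, the roots $\beta_\pm = \frac{\lambda \pm \sqrt{\lambda^2 - 4}}{2}$ are negative with product $1$, so $\beta_- < -1 < \beta_+ < 0$; it then suffices to show $b_i > \beta_+$ for all $i$, because $\beta_+$ is the larger root of $g$. I expect this last step to be the crux. The idea is an invariance argument: $\beta_+$ is a fixed point of $f$, the base case $b_1 = 1/\lambda > \beta_+$ follows from $\lambda < \beta_-$ together with $\beta_+ = 1/\beta_-$, and $f$ is strictly increasing on $(\lambda, \infty)$, so from $b_i > \beta_+ \,(> \lambda)$ one gets $b_{i+1} = f(b_i) > f(\beta_+) = \beta_+$. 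By induction $b_i \in (\beta_+, 0)$ for all $i$, whence $g(b_i) > 0$. The main obstacle is precisely verifying that the interval $(\beta_+, 0)$ is forward-invariant under $f$ and that the base case lies in it; once that monotone, fixed-point bookkeeping is in place, both (a) and (b) follow immediately from the reductions above.
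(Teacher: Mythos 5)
Your proof is correct, but it takes a genuinely different route from the paper's. The paper argues directly and inductively on the auxiliary sequence $a_i$: it checks $a_2-a_1=-\frac{1}{\lambda}>0$ and propagates $a_{k+1}-a_k=\frac{a_k-a_{k-1}}{a_k a_{k-1}}>0$, concluding the $a_i$ are negative and increasing, hence the $b_i=1/a_i$ negative and decreasing; part (b) is then extracted in two lines from $b_\theta > b_{\theta+1}=\frac{1}{\lambda-b_\theta}$ by multiplying through by the negative quantities $\lambda-b_\theta$ and $b_\theta$. You instead reduce both parts to the single inequality $g(b_i)=b_i^2-\lambda b_i+1>0$ and prove it dynamically, via the discriminant case-split and forward invariance of $(\beta_+,0)$ under the M\"obius map $f(x)=\frac{1}{\lambda-x}$ with fixed point $\beta_+$. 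Your reduction identities are right ($b_{i+1}-b_i=\frac{g(b_i)}{\lambda-b_i}$ with $\lambda-b_i=a_{i+1}<0$, and $\lambda-\bigl(b_\theta+\frac{1}{b_\theta}\bigr)=-\frac{g(b_\theta)}{b_\theta}$), the base case $b_1=\frac{1}{\lambda}>\beta_+$ does follow from $\lambda<\beta_-$ (strict, since $-\lambda>\sqrt{\lambda^2-4}$), and the induction is sound. What your route buys: a quantitative lower bound $\beta_+<b_i<0$, and, more substantively, a proof of (b) at the endpoint $\theta=y_r$ that never invokes $b_{y_r+1}$ --- the paper's proof of (b) uses $b_\theta>b_{\theta+1}$, which for $\theta=y_r$ lies one step beyond the range where part (a) established monotonicity (patchable, e.g.\ since $b_{y_1}+\cdots+b_{y_r}=\lambda$ forces $b_{y_r}>\lambda$ and hence $a_{y_r+1}=\lambda-b_{y_r}<0$, but the paper glosses this). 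What the paper's route buys is brevity and uniformity: no case split on the sign of $\lambda^2-4$. One cosmetic slip on your side: the strict chain $\beta_-<-1<\beta_+$ fails when $\lambda=-2$, a value actually attained (e.g.\ by $S(1,1,1,1)=K_{1,4}$, whose index is exactly $2$), where $g(x)=(x+1)^2$ has the double root $-1$; your argument survives unchanged, since $\lambda<\beta_-$ remains strict, so $b_1>\beta_+$ and $g(x)>0$ for all $x>\beta_+$, but you should write $\beta_-\le -1\le \beta_+$.
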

\begin{proof}
(a) We first observe that the sequence $a_i<0,~ i=1,\ldots y_r$. This is so by Theorem \ref{mainA}. Notice that $0 > a_1=\lambda = -\lambda_1(T)$ is the negative of the index of $T$. The number of $i$'s to consider is the number of vertices of the largest path, that is $y_r$. If $y_r\geq 2$, then $a_2-a_1=a_1-\frac{1}{a_1}-a_1=-\frac{1}{a_1}=-\frac{1}{\lambda} >0$. Suppose, by induction, that $a_{k}-a_{k-1}>0$ for some $k$. Then
$$ a_{k+1}-{a_k} = a_1 - \frac{1}{a_k} - a_1 + \frac{1}{a_{k-1}}=
 \frac{1}{a_{k-1}}- \frac{1}{a_k} =\frac{a_k-a_{k-1}}{a_k a_{k-1}} >0.
$$
This shows that the sequence $a_i, ~i = 1 \ldots, y_r-1$ is increasing. Now, since $b_1=\frac{1}{\lambda}<0$ and for $i = 1,\ldots, y_r$,
$$b_{i+1} - b_{i}= \frac{1}{a_{i+1}}  -  \frac{1}{a_{i}}= \frac{a_{i} - a_{i+1}}{a_{i}a_{i+1}} <0$$ because $a_i<0$ is increasing.

(b) From (a) we obtain  $b_{\theta}>b_{\theta+1}=\frac{1}{\lambda -b_{\theta}}$. Hence
$$b_{\theta}>\frac{1}{\lambda -b_{\theta}} \Rightarrow b_{\theta}(\lambda -b_{\theta})<1$$
$$\lambda -b_{\theta}>\frac{1}{b_{\theta}} \Rightarrow\lambda > b_{\theta}+\frac{1}{b_{\theta}}.$$
\end{proof}

Now given a starlike $T=[y_1,\ldots,y_r]$ with index $\lambda_1(T)$, the equation (\ref{bis}) may be read as $b_{y_1}+ \cdots + b_{y_r}=-\lambda_1(T)$. We want to compare the index of $T$ with another starlike, say $T^\prime=[z_1,\ldots,z_s]$. Let us assume that the largest path of $T^\prime$ is no larger than the largest path of $T$, equivalently,that
$$y_r \geq z_s.$$
Consider applying the algorithm Diagonalize to $T^\prime$ with the same $\lambda=-\lambda_1(T)$.
Notice that the values in all vertices (but the root) are negative by Lemma \ref{b_n properties}, because these are the same $b_i$ associated with $T$, for some natural number $i$. Let us then consider the last value, computed at the root, given by
$$L=\lambda - (b_{z_1}+ \cdots + b_{z_s}).$$
$L$ is either positive, negative or zero. In the context of Lemma \ref{mainA}, we conclude the following. If $L<0$, then, since all the values are negative, all the eigenvalues of $T^\prime$ are smaller than $\lambda_1(T)$, including its index. If $L=0$, then $\lambda_1(T) = \lambda_1(T^\prime)$. And, finally, if $L > 0$, we have that $n-1$ eigenvalues of $T^\prime$ are smaller than $\lambda_1(T)$ and exactly one (the index) is larger than $\lambda_1(T)$.

We state this for reference, since this is the main tool to prove our result.

\begin{thm} \label{maintool} Let $T=[y_1,\ldots,y_r]$ and $T^\prime=[z_1,\ldots,z_s]$ be two non isomorphic starlikes with $n$ vertices. Additionally, assume $r,s >2$ and $z_s \leq y_r$. If
$$  (b_{y_1}+ \cdots + b_{y_r})-(b_{z_1}+ \cdots + b_{z_s})  >0,$$
then $$\lambda_1(T) <  \lambda_1(T^\prime),$$
that is, the index of $T^\prime$ is larger than the index of $T$.
\end{thm}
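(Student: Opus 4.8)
The plan is to read off the sign pattern of the diagonal matrix produced by running the Jacobs--Trevisan algorithm on $T'$ at the scalar $\lambda = -\lambda_1(T)$, and then invoke Lemma \ref{mainA}. The entire argument rests on the observation that, because $z_s \le y_r$, the vertex values the algorithm assigns on $T'$ are exactly the negative numbers $a_k$ (equivalently $b_k = 1/a_k$) already computed for $T$ at the \emph{same} $\lambda$.

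First I would fix $\lambda = -\lambda_1(T)$ and form the sequence $b_k$ of (\ref{bis1}); by Theorem \ref{formulacao} this $\lambda$ satisfies $b_{y_1} + \cdots + b_{y_r} = \lambda$. Next I would run Diagonalize$(T', \lambda)$. Each path $P_{z_j}$ of $T'$ has $z_j \le z_s \le y_r$ vertices, so the values assigned along it are precisely the initial segment $a_1, \ldots, a_{z_j}$ of the sequence attached to the longest path of $T$, and in particular $b_{z_j}$ is the genuine term $b_{z_j}$ of the sequence built for $T$. By Lemma \ref{b_n properties}(a) all of $b_1, \ldots, b_{y_r}$ are strictly negative, hence nonzero. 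This is the crucial point: since no non-root vertex ever takes the value $0$, the algorithm never enters its zero-handling branch before reaching the root, and every non-root diagonal entry is negative.

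Then I would compute the single remaining entry, the value at the root:
\[
L \;=\; \lambda - (b_{z_1} + \cdots + b_{z_s}) \;=\; (b_{y_1} + \cdots + b_{y_r}) - (b_{z_1} + \cdots + b_{z_s}),
\]
where the second equality again uses Theorem \ref{formulacao}. The hypothesis of the statement is exactly that $L > 0$. Hence the diagonal matrix $D$ output by Diagonalize$(T', \lambda)$ has exactly one positive entry (the root) and $n-1$ negative entries. Finally, applying Lemma \ref{mainA} with $\alpha = \lambda_1(T)$, the number of positive entries of $D$ equals the number of eigenvalues of $T'$ exceeding $\lambda_1(T)$; since this number is $1$, the index $\lambda_1(T')$, being the largest eigenvalue of $T'$, must be that single eigenvalue, so $\lambda_1(T') > \lambda_1(T)$, as claimed.

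I expect the only delicate step to be the verification that the algorithm never triggers the zero branch on $T'$, which is precisely where the hypothesis $z_s \le y_r$ is used: it guarantees that all the $b_{z_j}$ are honest negative members of the sequence built for $T$, rather than values that would force a sign change or an edge deletion and thereby break the clean ``one positive, $n-1$ negative'' count. Everything else is a direct bookkeeping of the two identities supplied by Theorem \ref{formulacao} and Lemma \ref{b_n properties}.
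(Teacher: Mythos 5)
Your proposal is correct and follows essentially the same route as the paper, whose proof is the discussion immediately preceding the theorem: run Diagonalize$(T',\lambda)$ at $\lambda=-\lambda_1(T)$, use $z_s\le y_r$ together with Lemma~\ref{b_n properties}(a) to see that all non-root entries are the negative values $b_i$ from $T$'s sequence, compute the root value $L=\lambda-(b_{z_1}+\cdots+b_{z_s})$, and apply Lemma~\ref{mainA} to the sign pattern of $D$. Your explicit remark that the zero-handling branch is never triggered before the root is a point the paper leaves implicit, but the argument is otherwise identical.
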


\section{Ordering partitions}\label{sec:order}

Ordering partitions lexicographically is somewhat obvious. For our purposes, however, we need to understand exactly how to change one partition to find the next one. The main goal of this section is to characterize the types of changes that make one partition to be the successor of a given partition.

We define, for each $n\geq 4$ and $r \geq 3$ the set of ordered natural numbers
$$\Omega_{n-1,r}:=\left\{[y_1, \ldots, y_r] \, | \, \sum_{k=1}^{r} y_k =n-1 \text{ and } y_1 \leq \cdots\leq y_r \right\},$$
as the set of partitions of $n-1$ with $r$ parts. Each partition $[y_1, \ldots, y_r]$ corresponds to a starlike tree with $n$ vertices and $r$ paths each one with $y_i$ vertices.

We introduce the lexicographical order $\prec$ in  $\Omega_{n-1,r}$ by
$$[y_1,\ldots, y_r] \prec [z_1, \ldots, z_r] \Leftrightarrow \{y_i=z_i, \; 1\leq i\leq s-1 \text{ and } y_s < z_s \},$$
where $[y_1, \ldots, y_r] \neq [z_1, \ldots, z_r]$.

We consider the set $\Omega_{n-1}:=\Omega_{n-1,3} \cup \Omega_{n-1,4} \cup \cdots \cup \Omega_{n-1,n-1}$ that corresponds to the set of all starlike trees with $n$ vertices, and extend the lexicographical order by assuming that \begin{equation}\label{jump}
  [m_1, \ldots, m_r] \prec [y_1, \ldots, y_{r+1}],
\end{equation}
for any $[m_1, \ldots, m_r] \in \Omega_{n-1,r}$ and $[y_1, \ldots, y_{r+1}] \in \Omega_{n-1,r+1}$ then $(\Omega_{n-1}, \prec)$ is a totally ordered set.
For instance,
{\footnotesize
$$ \Omega_{7}= \{[1, 1, 5],[1, 2, 4],[1, 3, 3],[2, 2, 3],[1, 1, 1, 4],$$ $$[1, 1, 2, 3],[1, 2, 2, 2],[1, 1, 1, 1, 3],[1, 1, 1, 2, 2],[1, 1, 1, 1, 1, 2],[1, 1, 1, 1, 1, 1, 1]\}.$$}

Given a set $\Omega_{n-1}$, we want to describe its structure. In particular, we want to identify consecutive elements, and more precisely, we want to define general operations that transform a partition into a new partition having no other partition between them.

We say that a partition $[m_1, \ldots, m_t] \in \Omega_{n-1}$ \emph{covers} $[y_1, \ldots, y_{s}] \in \Omega_{n-1}$ if they are consecutive, that is if $[y_1,\ldots, y_{s}]\prec [m_1, \ldots, m_t]$ and there is no partition between them. An operation that transforms  $[y_1, \ldots, y_{s}]$ into the partition that covers it will be called a \emph{covering}. A covering is a transformation that when applied to a partition $\gamma$, determines the \emph{successor} of $\gamma$.

Giving the observation in (\ref{jump}), we see that all the partitions in $\Omega_{n-1,s}$ are smaller than any partition in $\Omega_{n-1,t}$ for all $ 3\leq s < t \leq n-1$. Hence, we are going to investigate more closely the subset $\Omega_{n-1,r}$ for a given $r\geq 3$.

Let us consider, for the sake of an example, the subset

{\footnotesize$$\Omega_{11,3}=\left\{{\color{red} [1,1,9],[1,2,8],[1,3,7], [1,4,6],[1,5,5]},{\color{black}[2,2,7], [2,3,6], [2,4,5]}, {\color{blue}[3,3,5], [3,4,4]} \right\}.$$}

Clearly, the \emph{smallest} partition  with respect to the lexicographical order in $\Omega_{n-1,r}$ is given by
$$[y_1, \ldots, y_r]:=[1,\ldots,1, (n-1)-(r-1)]= [1,\ldots,1, n-r]$$
and it is well defined if $n-1 \geq r$. The \emph{largest} partition will be the \emph{balanced} partition, the one having all values equal to $m=\lfloor \frac{n-1}{r} \rfloor$, and if $l=(n-1)-r*m>0$, then $l$ values will be $m+1$. Its configured as $$[\underbrace{m,\ldots,m}_{r-l},\underbrace{m+1,\ldots,m+1}_{l}].$$

In order to understand how to determine a successive partition, we define the following \emph{$\alpha$ operation} in $\Omega_{n-1,r}$.
Fixed $[y_1,\ldots , y_r] \in \Omega_{n-1,r}$  and $1\leq i < j \leq r$ we define $$\alpha_{i,j}([y_1, \ldots, y_r])= [y_1, \ldots,y_{i}+1, \ldots, y_{j}-1, ..., y_r],$$
if $y_{j} \geq 2$ and $y_{i}+1 \leq y_{i+1}$. For example $\alpha_{1,2}([2,4,5])=[3,3,5]$ and $\alpha_{1,3}([2,4,5])=[3,4,4]$. We notice that $$[y_1, \ldots, y_r] \prec \alpha_{i,j}([y_1, \ldots, y_r])$$
that is, an $\alpha$ operation  preserves order, it produces a larger partition. However, as the second example shows, an $\alpha$ transformation, in general, does not produce a successor, that it, is not always a covering.

Starting with the smallest partition, we construct a \emph{maximal class} in $\Omega_{n-1,r}$ by successively applying $\alpha_{r-1, r}$  to $\gamma=[1,\ldots,1, n-r]$, obtaining $[1,\ldots,z_{r-1},z_r]$ until $z_{r-1}=z_{r}$ or $z_{r-1}+1=z_{r}$:
$$\begin{array}{rl}
    \alpha_{r-1, r}^{0}(\gamma)  =[1,\ldots,1, n-r] & \prec\alpha_{r-1, r}^{1}(\gamma) =[1,\ldots,2, n-r-1])\prec  \\
    & \prec \cdots  \prec \alpha_{r-1, r}^{t}(\gamma)=[1, \ldots,1 ,z_{r-1}, z_r].
  \end{array}
$$

Let us nominate the set of partitions produced by iterating $\alpha_{r-1, r}$, the \emph{orbit} of $\gamma=[1,\ldots,1, n-r]$.
We observe that this is a finite procedure and, most importantly, each transformation  $\alpha_{r-1,r}^i(\gamma)$ is a covering. That is, for each $i=1,\ldots,t$, $\alpha_{r-1, r}^{i}([1,\ldots,1, n-r])$ produces a successive partition.

Having determined the largest partition of the initial maximal class, we find the smallest partition of the next class, simply replacing $z_{r-2}=1$ by $z_{r-2}=2$:
$$[1, \ldots,1 ,z_{r-1}, z_r] \prec [1, \ldots, 2,2, (n-1)-(r-2)\cdot 1 - 2\cdot 2].$$
To find consecutive elements, we iterate the application of $\alpha_{r-1, r}$ producing the next maximal class, and so on.

Back to our example $\Omega_{11,3}$ we have 3 maximal classes and the ordering  {\footnotesize$$\underline{[1,1,9] \prec [1,2,8] \prec [1,3,7] \prec [1,4,6] \prec [1,5,5]} \prec $$ $$\prec \underline{[2,2,7] \prec  [2,3,6] \prec  [2,4,5]} \prec \underline{[3,3,5] \prec [3,4,4]}.$$}

The \emph{minimum} partition of each maximal class is given by
$$[m_1,\ldots,m_t,k, \ldots, k, (n-1)-(m_1+\cdots+m_t)- s k].$$

The conclusion is that each maximal class is the orbit of the minimal partition. The minimal partitions of $\Omega_{11,3}$ are  $[1,1,9]\prec [2,2,7]\prec [3,3,5]$.

The procedure above described obviously generates an ordered union of maximal classes, that obviously is contained in $\Omega_{n-1,r}$. To see the equality we will next analyze the coverings in detail.

\subsection{Coverings}

A covering is a transform that when applied to a partition produces its successor. Which means that this pair of partitions are consecutive, having no partition between them in the lexicographical order and the second one is bigger.

More precisely, we will say that $[z_1, \ldots, z_s]$ covers  $[y_1, \ldots, y_r] $ and denote $$[y_1, \ldots, y_r] \rightarrow [z_1, \ldots, z_s]$$ if
\begin{enumerate}
  \item[(a)] $[y_1, \ldots, y_r] \prec [z_1, \ldots, z_s]$ (in particular $s \geq r$);
  \item[(b)]There is no $[x_1, \ldots, x_t]$ such that  $[y_1, \ldots, y_r] \prec [x_1, \ldots, x_t] \prec [z_1, \ldots, z_s]$.\\
\end{enumerate}

To describe all the coverings in $\Omega_{n-1}$  we classify them according to the previous discussion. There are, hence, three types of coverings: (I) the one going from the largest partition of $\Omega_{n-1,r}$ to the smallest partition $\Omega_{n-1,r+1}$; (II) the ones within a maximal class and (III) the one going from the largest partition of a maximal class to the smallest partition of the next maximal class. The next result characterizes these coverings.

\begin{thm} \label{transitions}
The coverings in $\Omega_{n-1}$ are characterized by\\

\noindent {\bf Type I:} $[y_1, \ldots, y_r] \rightarrow [z_1, \ldots, z_{r+1}]$.\\
Here $[y_1, \ldots, y_r]= [\underbrace{m,\ldots,m}_{r-l},\underbrace{m+1,\ldots,m+1}_{l}]$ is the balanced partition in $\Omega_{n-1,r}$ and $[z_1, \ldots, z_{r+1}]=[1,\ldots,1,(n-1-r)$ is the smallest partition of $\Omega_{n-1,r+1}$.\\

\noindent \textbf{Type II:} $[y_1, \ldots, y_r] \rightarrow  \alpha_{r-1, r}([y_1, \ldots, y_r])$.\\ In  particular $[y_1, \ldots, y_r] \rightarrow  \alpha_{r-1, r}([y_1, \ldots, y_r])$ implies that every maximal class is composed by consecutive partitions.\\

\noindent \textbf{Type III:} If the covering is not Type I or Type II then
$$[y_1, \ldots, y_r]\rightarrow [z_1, \ldots, z_r]$$
implies that \begin{itemize}
\item[(i)] either $[z_1, \ldots, z_r]=  \alpha_{t+1, r}([y_1, \ldots, y_r])$ or
\item[(ii)]$[y_1, \ldots, y_r]= [m_1,\ldots,m_t, k,\ldots,\xi]$,\\
       $[z_1,\ldots, z_r]= [m_1,\ldots,m_t, k+1,\ldots,\theta]$, with $\theta \geq \xi$.
\end{itemize}
\end{thm}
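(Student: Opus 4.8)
=== PROOF PROPOSAL (LaTeX) ===

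\noindent\textbf{Proof plan.} The plan is to establish the three types of coverings by carefully tracking which partitions can possibly lie strictly between a given partition and its candidate successor, using the structure of maximal classes and the jump rule~(\ref{jump}) as the organizing principle. The whole statement reduces to showing that the ordered union of orbits constructed in the preceding discussion exhausts $\Omega_{n-1}$ with no gaps, so I would argue that each claimed transition produces an element with nothing strictly between it and its preimage, and conversely that every covering falls into one of the three listed patterns.

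\medskip

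\noindent\emph{Type I.} First I would treat the transition between blocks $\Omega_{n-1,r}$ and $\Omega_{n-1,r+1}$. By the extended order~(\ref{jump}), every partition with $r$ parts precedes every partition with $r+1$ parts, so the successor of the \emph{largest} element of $\Omega_{n-1,r}$ must be the \emph{smallest} element of $\Omega_{n-1,r+1}$. I would invoke the already-computed forms: the largest element of $\Omega_{n-1,r}$ is the balanced partition, and the smallest element of $\Omega_{n-1,r+1}$ is $[1,\ldots,1,n-1-r]$. Since no partition of any length sits between these two blocks, this is immediate once the ``largest'' and ``smallest'' claims from the earlier discussion are granted; the only care needed is to confirm the balanced partition is genuinely maximal in lexicographic order, which I would do by showing any $\alpha$ operation applied to it either violates the feasibility condition $y_i+1\le y_{i+1}$ or leaves the block.

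\medskip

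\noindent\emph{Type II.} Next I would show that within a fixed block, applying $\alpha_{r-1,r}$ yields a covering, i.e.\ that $[y_1,\ldots,y_r]$ and $[y_1,\ldots,y_{r-1}+1,y_r-1]$ are consecutive. The key observation is that $\alpha_{r-1,r}$ changes only the last two coordinates, raising $y_{r-1}$ by one, so any intermediate partition $[x_1,\ldots,x_r]$ would have to agree with $[y_1,\ldots,y_r]$ in the first $r-2$ coordinates and satisfy $y_{r-1}\le x_{r-1}\le y_{r-1}+1$; a short case analysis on $x_{r-1}\in\{y_{r-1},y_{r-1}+1\}$ then forces $[x_1,\ldots,x_r]$ to equal one of the two endpoints, leaving no room in between. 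This gives that each orbit is a chain of consecutive partitions.

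\medskip

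\noindent\emph{Type III.} The main obstacle will be Type III, the transition from the top of one maximal class to the bottom of the next within the same block. Here $\alpha_{r-1,r}$ is no longer applicable (the last two entries are already balanced, so $y_{r-1}+1\le y_r$ fails), and one must increment an earlier coordinate and then re-minimize the tail. I would characterize the successor by locating the rightmost index $t+1$ at which a feasible $\alpha$-increment is still possible, showing that $\alpha_{t+1,r}$ produces the successor when the resulting tail is already minimal (case~(i)), and otherwise that the successor is obtained by incrementing the prefix coordinate $k$ to $k+1$ and resetting the remaining entries to their minimal feasible configuration $[\ldots,k+1,\ldots,\theta]$ with $\theta\ge\xi$ (case~(ii)). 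The delicate point is proving minimality of the resulting tail—that no partition with the same prefix and a strictly smaller lexicographic tail exists—and that nothing with a different, intermediate prefix slips in between; I would handle this by comparing against the explicit minimal form of a maximal class established earlier, namely $[m_1,\ldots,m_t,k,\ldots,k,(n-1)-(m_1+\cdots+m_t)-sk]$, and showing the claimed successor realizes exactly that minimal form for the incremented prefix. Finally, since Types I, II, and III are mutually exclusive and collectively cover every possible pair of consecutive partitions (by the trichotomy: cross-block, within-orbit, or cross-orbit), I would conclude that these three patterns exhaust all coverings in $\Omega_{n-1}$.
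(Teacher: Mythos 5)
Your Types I and II are essentially the paper's own argument: Type I is read off from the block order (\ref{jump}) together with the identification of the largest element of $\Omega_{n-1,r}$ and the smallest of $\Omega_{n-1,r+1}$, and your Type II argument (any intermediate $[x_1,\ldots,x_r]$ agrees on the first $r-2$ coordinates, satisfies $y_{r-1}\le x_{r-1}\le y_{r-1}+1$, and is then pinned to an endpoint by the sum constraint) is word for word the paper's proof. For Type III you reverse the direction of the paper: you would construct the candidate successor (increment the rightmost feasible coordinate, re-minimize the tail) and prove it is a covering, whereas the paper assumes a covering and derives its structure. The constructive route is viable in principle, since successors are unique in a finite total order, so a min-to-max chain of verified coverings would both exhaust $\Omega_{n-1}$ and classify all coverings.

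However, there is a genuine gap in your Type III plan: you assert the inequality $\theta\ge\xi$ as part of the "minimal feasible configuration" but never indicate how to prove it, and it is false in general for the naive "increment $k$ and reset the tail" description — for instance $[1,3,4,5]\rightarrow[1,4,4,4]$ has $\theta=4<\xi=5$. The entire content of Type III is that $\theta<\xi$ can occur only in the exceptional case $[z_1,\ldots,z_r]=\alpha_{t+1,r}([y_1,\ldots,y_r])$, and your criterion for case (i) ("when the resulting tail is already minimal") does not pin down when that happens. The paper needs two steps that your plan lacks: first, Lemma~\ref{interm equil} shows by an insertion argument that for any covering the tail of $[y_1,\ldots,y_r]$ beyond position $t+1$ is near-balanced, with entries $k+c$ and $k+c+\delta$, $\delta\in\{0,1\}$ (your plan never establishes any constraint on the shape of the source partition); second, the preservation of the sum yields the identity $\theta=\xi-1+(a+b-1)(c-1)+(b-1)\delta$, and a case analysis over $\delta\in\{0,1\}$ and $c\in\{0,1\}$, $c>1$ shows $\theta\ge\xi$ except precisely when $c=\delta=1$ and $b=1$, which is the $\alpha_{t+1,r}$ case. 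Your identified "delicate point" (tail minimality and no intermediate prefix) only addresses the covering property itself; without the near-balance lemma and the sum computation, the dichotomy (i)/(ii) and the inequality $\theta\ge\xi$ — exactly what Proposition~\ref{mon_complex} later uses to insert the intermediate starlike and invoke the subgraph lemma — remain unproved.
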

\begin{proof}
By construction, Type I coverings will be only between the bigger partition in $\Omega_{n-1,r}$ and the initial partition of $\Omega_{n-1,r+1}$. The fact that this represents a covering is obvious.

Type II transitions occurs by performing an $\alpha$ transformation. We already see that $\alpha$ preserves order thus we need to prove that $[y_1, \ldots, y_r] $ and $\alpha_{r-1, r}([y_1, \ldots, y_r])$ are consecutive. Suppose that
$$[y_1, \ldots, y_r]  \prec [x_1, \ldots, x_r]   \prec  \alpha_{r-1, r}([y_1, \ldots, y_r])= [z_1, \ldots, z_s] $$
then $y_k=x_k= z_k$ for $k<r-1$ and $y_{r-1} \leq  x_{r-1} \leq z_{r-1}=y_{r-1} +1$. As the sum must remain equal to $n-1$ we conclude that $[x_1, \ldots, x_r]  =[y_1, \ldots, y_r] $ or $[x_1, \ldots, x_r]=\alpha_{r-1, r}([y_1, \ldots, y_r])$.

Type III transitions are more complex. We separate the proof in the next two lemmas.
\end{proof}

\begin{lema} \label{interm equil}
  If $[m_1,\ldots,m_t, k, y_{t+2}, \ldots, y_r] \rightarrow [m_1,\ldots,m_t, k+1, z_{t+2}, \ldots, z_s]=[z_1, \ldots, z_s]$ then $y_r - y_{t+2} \leq 1$. That is, $y_{t+2} = k+c$ with $c \geq 0$ and $y_{r} = k+c+\delta$ where $\delta=0$ or $\delta=1$.
\end{lema}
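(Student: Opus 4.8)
The plan is to convert the covering hypothesis into a \emph{maximality} statement about the left-hand partition and then recognize the extremal configuration as a balanced tail. Write $P=[m_1,\ldots,m_t,k,y_{t+2},\ldots,y_r]$ and $P'=[m_1,\ldots,m_t,k+1,z_{t+2},\ldots,z_s]$, and set $S=(n-1)-(m_1+\cdots+m_t)-k$ together with $p=r-t-1$, so that $y_{t+2}+\cdots+y_r=S$ splits into $p$ parts, each at least $k$ since $P$ is nondecreasing.

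First I would show that $P$ is the $\prec$-largest partition in $\Omega_{n-1}$ whose first $t+1$ entries are $m_1,\ldots,m_t,k$. Suppose not, and let $X$ be a legal partition of $n-1$ with that same prefix, $r$ parts, and $P\prec X$ (the natural candidate being $P$ with its tail replaced by the balanced rearrangement of $y_{t+2},\ldots,y_r$). Since $X$ has $r$ parts while $P'$ has $s\geq r$ parts, and they agree on $m_1,\ldots,m_t$ but carry $k$ versus $k+1$ in position $t+1$, the order definition forces $X\prec P'$ (lexicographic comparison if $s=r$, the part-count rule (\ref{jump}) if $s>r$). Then $P\prec X\prec P'$ contradicts that $P'$ covers $P$. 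Hence no such $X$ exists, which says precisely that the tail $[y_{t+2},\ldots,y_r]$ is the $\prec$-largest nondecreasing sequence of $p$ parts summing to $S$ with each part $\geq k$.

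Next I would identify this maximum. By the same reasoning used earlier in the section to single out the balanced partition as the largest element of $\Omega_{n-1,r}$, the $\prec$-largest partition of $S$ into $p$ parts is the balanced one, all of whose parts equal $\lfloor S/p\rfloor$ or $\lfloor S/p\rfloor+1$. The constraint $\geq k$ is not binding here, because balancing can only raise the smallest part: the balanced minimum $\lfloor S/p\rfloor$ dominates the minimum $y_{t+2}$ of any partition of $S$ into $p$ parts, so $\lfloor S/p\rfloor\geq y_{t+2}\geq k$, and the constrained and unconstrained maxima coincide. Therefore $[y_{t+2},\ldots,y_r]$ is itself balanced, i.e. $y_r-y_{t+2}\leq 1$; writing $c=y_{t+2}-k\geq 0$ and $\delta=y_r-y_{t+2}\in\{0,1\}$ gives the stated form.

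The step I expect to be most delicate is the maximality argument: one must verify that the competitor $X$ is a genuinely \emph{legal} element of $\Omega_{n-1}$ (nondecreasing, total $n-1$, all parts $\geq k$) and that it lies strictly between $P$ and $P'$ in the order $\prec$, keeping careful track of the part-count convention (\ref{jump}) when $P'$ has more parts than $P$. Concretely this reduces to checking that replacing the tail of $P$ by its balanced rearrangement preserves monotonicity and the lower bound $k$, which is exactly what the ``balancing raises the minimum'' observation secures; the remaining comparisons are routine applications of the lexicographic order.
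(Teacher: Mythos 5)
Your proof is correct, and it takes a recognizably different route from the paper's. Both arguments exploit the same mechanism --- a covering admits nothing strictly between, so it suffices to exhibit an intermediate partition sharing the prefix $m_1,\ldots,m_t,k$ --- but the witnesses differ. The paper argues locally: if $y_r \geq y_{t+2}+2$, it applies a single operation $\alpha_{i,j}$, with $i$ the last index carrying the value $y_{t+2}$ and $j$ the first index carrying a value $\geq y_{t+2}+2$, and checks directly that the resulting partition sits strictly between the two. You argue globally: the covering hypothesis forces $P$ to be lexicographically maximal among $r$-part partitions with its prefix, and the maximal tail of fixed length $p$, sum $S$ and lower bound $k$ is the balanced one, using the extremal fact (which the paper asserts when identifying the largest element of $\Omega_{n-1,r}$) that balanced partitions are the lexicographic maxima, together with your observation that the constraint $\geq k$ is slack because $\lfloor S/p\rfloor \geq y_{t+2} \geq k$. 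Your version buys a cleaner conceptual statement (covering $\Rightarrow$ prefix-maximal $\Rightarrow$ balanced tail) and avoids verifying the admissibility conditions of a specific $\alpha_{i,j}$ move; the paper's version is more self-contained, since it does not lean on the unproved (though standard and easy) claim that the balanced partition is the lexicographic maximum. One small correction: your opening claim that $P$ is the largest partition in all of $\Omega_{n-1}$ with that prefix is too strong --- a partition with the same prefix but more than $r$ parts exceeds $P$ by the rule (\ref{jump}) yet also exceeds $P'$, so it yields no contradiction and the claim can fail --- but what you actually prove and use, namely maximality among $r$-part partitions with the given prefix, is exactly right and suffices for the balancing conclusion.
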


\begin{proof}
Otherwise, if $y_r \geq  k+c+2$ we can insert a configuration $[x_1, \ldots, x_r]$ between $[y_1, \ldots, y_r]$ and $ [z_1, \ldots, z_r]$ :\\\\
$[y_1, \ldots, y_r]= [m_1,\ldots,m_t, k, k+c,\ldots, k+c+1, \ldots, k+c+2,\ldots,\xi] \prec $\\\\
$[x_1, \ldots, x_r]= \alpha_{i, j}([m_1,\ldots,m_t, k,  k+c,\ldots, k+c+1, \ldots, k+c+2,\ldots,\xi])\prec $\\\\
$[z_1, \ldots, z_r]= [m_1,\ldots,m_t, k+1, k+1, \ldots,  k+1, \theta]$,\\\\
where $i$ is the largest integer with $y_i= k+c$ and $j$ is the smallest integer with $y_i \geq  k+c+2$.
\end{proof}

\begin{lema}
  The statement for Type III covering holds.
\end{lema}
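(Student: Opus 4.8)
The plan is to determine the successor $[z_1,\ldots,z_r]$ completely and then read off which of the two forms it takes. I begin with two reductions from the hypotheses. Since the covering is not of Type I, both partitions lie in the same block $\Omega_{n-1,r}$, hence have the same number $r$ of parts: by (\ref{jump}) the classes form consecutive blocks, so a covering is either internal to one block or spans the boundary from the maximum of $\Omega_{n-1,r}$ to the minimum of $\Omega_{n-1,r+1}$, and the latter is exactly a Type I covering. Since it is not of Type II, the first index $t+1$ at which $[y_1,\ldots,y_r]$ and $[z_1,\ldots,z_r]$ disagree satisfies $t+1\le r-2$: a covering whose first disagreement is at position $r-1$ can only redistribute within the last two coordinates, and the unique such covering is $[y_1,\ldots,y_r]\to\alpha_{r-1,r}([y_1,\ldots,y_r])$, which is Type II. Write $m_i:=y_i=z_i$ for $i\le t$, $k:=y_{t+1}$ and $\xi:=y_r$.

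Next I would pin down $[z_1,\ldots,z_r]$ by repeated ``nothing fits in between'' arguments. First, $z_{t+1}=k+1$: were $z_{t+1}\ge k+2$, then incrementing $y_{t+1}$ to $k+1$ and filling the remaining coordinates with the lexicographically least admissible tail would give a valid partition strictly between $[y_1,\ldots,y_r]$ and $[z_1,\ldots,z_r]$ — the slack that $[z_1,\ldots,z_r]$ leaves for its own larger tail guarantees this completion is feasible. Second, with $z_{t+1}=k+1$ fixed, minimality of $[z_1,\ldots,z_r]$ among partitions sharing the prefix $m_1,\ldots,m_t,k+1$ forces
\begin{equation*}
[z_1,\ldots,z_r]=[m_1,\ldots,m_t,\underbrace{k+1,\ldots,k+1}_{r-t-1},\theta],\qquad \theta\ge k+1,
\end{equation*}
any non-minimal tail again admitting an intermediate partition. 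Third, $y_{t+2}>k$: otherwise $y_{t+1}=y_{t+2}=k$, and incrementing position $t+2$ instead produces a valid partition lying strictly between the two, contradicting the covering property.

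With $[z_1,\ldots,z_r]$ determined I would invoke Lemma \ref{interm equil} to get that the tail $y_{t+2},\ldots,y_r$ is balanced, say $y_{t+2}=k+c$ and $y_r=k+c+\delta$ with $c\ge1$ (from the previous step) and $\delta\in\{0,1\}$, and then split on the sign of $\theta-\xi$. Computing $\theta=(n-1)-\sum_{i\le t}m_i-(r-t-1)(k+1)$ and $\xi=y_r$ yields
\begin{equation*}
\theta-\xi=\sum_{j=t+2}^{r-1}y_j-(r-t-2)(k+1)-1 .
\end{equation*}
If $c\ge2$, every middle term is at least $k+2$, so $\theta-\xi\ge(r-t-2)(c-1)-1\ge0$ because $r-t-2\ge1$; and if $c=1$ with at least two tail entries equal to $k+2$, the same computation gives $\theta-\xi\ge0$. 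In both cases $\theta\ge\xi$, which is exactly case (ii). The only remaining possibility is $\theta<\xi$; there $c\ge1$ and balancedness force $\sum_{j=t+2}^{r-1}y_j=(r-t-2)(k+1)$, i.e. $y_{t+2}=\cdots=y_{r-1}=k+1$, while $\theta\ge k+1$ rules out $y_r=k+1$ and leaves $y_r=k+2$. A one-line check then gives $[z_1,\ldots,z_r]=\alpha_{t+1,r}([y_1,\ldots,y_r])$, which is case (i).

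The insertion arguments and the arithmetic for $\theta-\xi$ are routine; the point requiring care — and the step I expect to be the main obstacle — is organizing the case split so that every admissible balanced tail lands cleanly in (i) or (ii). In particular one must notice that case (i) is precisely the boundary situation $\theta=\xi-1$ (so it is genuinely \emph{not} an instance of (ii)), must discard the infeasible configuration in which the whole tail equals $k+1$ (forcing $\theta=k<k+1$), and must recognize that a single trailing $k+2$ is exactly an $\alpha_{t+1,r}$ transition. Lemma \ref{interm equil} is what makes all of this tractable: without the balancedness of the tail there is no control on the quantity $\theta-\xi$.
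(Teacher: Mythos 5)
Your overall architecture is sound and is essentially the paper's argument in disguise: after pinning down the successor as $[m_1,\ldots,m_t,k+1,\ldots,k+1,\theta]$, you invoke Lemma~\ref{interm equil}, compute $\theta-\xi$ from the conservation of the sum, and split into cases — this is exactly the paper's computation $\theta=\xi-1+(a+b-1)(c-1)+(b-1)\delta$ in different clothing, and your preliminary reductions (same number of parts since not Type I, first disagreement at position $t+1\le r-2$ since not Type II, the shape of the successor via insertion arguments) make explicit what the paper only assumes. Those two insertion arguments are correct. However, your third reduction is wrong as stated. You claim that if $y_{t+1}=y_{t+2}=k$ then ``incrementing position $t+2$ instead produces a valid partition lying strictly between the two.'' It does not: by Lemma~\ref{interm equil} the tail entries $y_{t+2},\ldots,y_r$ all lie in $\{k,k+1\}$ with at least one equal to $k$, so their total is at most $(r-t-1)(k+1)-1$; any partition agreeing with $y$ through position $t+1$ and having $w_{t+2}\ge k+1$ would need $r-t-1$ final entries each at least $k+1$, hence a tail sum of at least $(r-t-1)(k+1)$. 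So the intermediate partition you propose to insert never exists in this configuration, and the step as written fails.

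The conclusion $c\ge 1$ is nevertheless true, and the repair is cheap and already inside your own framework: if $c=0$ then every middle term in your formula is at most $k+1$, so $\theta-\xi\le -1$, giving $\theta\le\xi-1\le k$, which contradicts the bound $\theta\ge k+1$ you established in your second reduction. (Equivalently: the same counting as above, applied at position $t+1$ rather than $t+2$, shows the successor $z$ with $z_{t+1}=k+1$ could not exist at all when $c=0$ — the case is vacuous.) It is worth noting that the paper stumbles at the same spot: its case $\delta=1$, $c=0$ concludes ``$\theta\ge\xi=k+1$,'' whereas its own formula gives $\theta=k-a<k+1$ there, so that configuration is in fact infeasible and the case holds only vacuously. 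Your instinct to eliminate $c=0$ up front is therefore the right one; you just need the feasibility/counting argument rather than the phantom insertion. With that one step corrected, the rest of your case analysis (including the identification of the boundary case $\theta=\xi-1$, $b=1$ with the $\alpha_{t+1,r}$ transition) checks out.
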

\begin{proof}
From Lemma~\ref{interm equil} we can assume that there is $c\geq 0$ and $\delta \in \{0,1\}$ such that\\
$[y_1, \ldots, y_r]= [m_1,\ldots,m_t, k, \underbrace{k+c,\ldots, k+c}_{a}, \underbrace{k+c+\delta,\ldots,\xi}_{b}]$\\
where we have  $\xi=k+c+\delta$ and the number of entries equal to $k+c$ is $a\geq 1$, the number of entries equal to $k+c+\delta$ is $b\geq 1$ and $a+b \geq 2$ because it is not a Type II.

As $[z_1, \ldots, z_r]= [m_1,\ldots,m_t, k+1, k+1, \ldots,  \theta]$ must have the same sum, we conclude that
$$k+ a(k+c) + b (k+c+\delta) = (a+b) (k+1) + \theta$$
or $$\theta=  \xi -1 + (a+b-1)(c-1) +(b-1)\delta.$$

We will analyze all the different possibilities:
\begin{enumerate}
\item  $\delta=0$:
\begin{enumerate}
\item  $c=0$: In this case we have\\
$[y_1, \ldots, y_r]= [m_1,\ldots,m_t, k, k, ,\ldots,k, k]$\\
$[z_1, \ldots, z_r]= [m_1,\ldots,m_t, k+1, k+1, \ldots,  \theta]$
a contradiction because  the sum of the second exceeds the first one.
\item $c=1$: In this case we have\\
$[y_1, \ldots, y_r]= [m_1,\ldots,m_t, k, k+1, ,\ldots,k+1, k+1]$\\
$[z_1, \ldots, z_r]= [m_1,\ldots,m_t, k+1, k+1, \ldots , k+1,\theta]$
a contradiction again because we must have $\theta=k< k+1$.
\item $c>1$: In this case we have\\
 $\theta=  \xi -1 + (a+b-1)(c-1) +(b-1)\delta= \xi -1 + (a+b-1)(c-1) \geq  \xi,$\\
because $a+b \geq 2$ and $c-1 \geq 1$.
\end{enumerate}
\item $\delta=1$:
\begin{enumerate}
\item $c=0$: In this case we have\\
$[y_1, \ldots, y_r]= [m_1,\ldots,m_t, k, k,\ldots, k,k+1,\ldots, k+1,\xi]$\\
$[z_1, \ldots, z_r]= [m_1,\ldots,m_t, k+1, k+1, \ldots,  \theta]$\\
so $\theta \geq  \xi=k+1$.
\item $c=1$: In this case we have\\
$[y_1, \ldots, y_r]= [m_1,\ldots,m_t, k, k+1,\ldots, k+1,k+2,\ldots, k+2,\xi]$\\
$[z_1, \ldots, z_r]= [m_1,\ldots,m_t, k+1, k+1, \ldots , k+1,\theta]$.\\
In this situation we can have  $\theta=k+1< k+2=\xi$, which implies that $b=1$ (otherwise the sum will be different), that is\\
$[z_1, \ldots, z_r]= [m_1,\ldots,m_t, k+1, k+1, \ldots , k+1,k+1]=$\\ $= \alpha_{t+1, r}([m_1,\ldots,m_t, k, k+1,\ldots, k+1, \ldots, k+1,k+2])=$ \\ $=~\alpha_{t+1, r}([y_1, \ldots, y_r])$, otherwise, if  $\theta > k+1$ then $\theta \geq k+2=\xi$.\\
\item $c>1$: In this case we have\\
 $\theta=  \xi -1 + (a+b-1)(c-1) +(b-1)\delta= \xi -1 + (a+b-1)(c-1) + (b-1)\delta \geq  \xi,$\\
because $a+b \geq 2$, $b\geq 1$ and $c-1 \geq 1$.
\end{enumerate}
\end{enumerate}
The conclusion is that $\theta \geq \xi$ or $[z_1, \ldots, z_r]=  \alpha_{t+1, r}([y_1, \ldots, y_r])$.
\end{proof}

\section{Ordering starlikes by their indices}\label{sec:mainresult}

In this section we prove our main result, stated as the following theorem.
\begin{thm}\label{mainresult}
Let $n\geq4$ be an integer. Let $\mathcal{S}_n$ be the set of the starlike trees with $n$ vertices and $\Omega_{n-1}$ the set of all partitions $[y_1,\ldots,y_r]$ of $n-1$, with $3\leq r \leq n-1$. The order of the indices in $\mathcal{S}_n$ is the lexicographical order in $\Omega_{n-1}$. In particular, the indices of any two starlikes in $\mathcal{S}_n$ are distinct.
\end{thm}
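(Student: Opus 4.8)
The plan is to reduce the theorem to a statement about coverings and then dispatch each covering with Theorem~\ref{maintool}. Since $(\Omega_{n-1},\prec)$ is a finite totally ordered set, its elements form a single chain $\gamma_1\prec\gamma_2\prec\cdots\prec\gamma_N$ whose consecutive pairs are precisely the coverings classified in Theorem~\ref{transitions}. Hence, if I can prove that the index strictly increases across \emph{every} covering $\gamma\to\gamma'$, then chaining these strict inequalities along the whole list yields $\lambda_1(\gamma_1)<\cdots<\lambda_1(\gamma_N)$; this shows at once that the index order matches $\prec$ and that all indices are distinct. So everything reduces to the single claim: for each covering $\gamma\to\gamma'$ one has $\lambda_1(\gamma)<\lambda_1(\gamma')$.

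To establish that claim I would run the diagonalization trichotomy underlying Theorem~\ref{maintool}. Given a covering $\gamma\to\gamma'$, let $R$ be whichever of the two starlikes has the longer maximal path (so that the path hypothesis of Theorem~\ref{maintool} is satisfied with $R$ as the reference tree), put $\lambda=-\lambda_1(R)$, and build the sequence $\{b_k\}$. By Theorem~\ref{formulacao} the $b$-sum over $R$'s own partition equals $\lambda$, and by the trichotomy the sign of the root value $L$ decides the comparison. A short case check on whether $R$ is the predecessor or the successor shows that, in all situations, the inequality $\lambda_1(\gamma)<\lambda_1(\gamma')$ is equivalent to the single clean condition that the predecessor's $b$-sum strictly exceeds the successor's, both evaluated at the same $\lambda$. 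The task therefore becomes: verify this one $b$-sum inequality for each covering type of Theorem~\ref{transitions}.

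The technical engine is a convexity property of $\{b_k\}$. Beyond being negative and strictly decreasing (Lemma~\ref{b_n properties}(a)), its consecutive gaps $g_k:=b_k-b_{k+1}$ are themselves strictly decreasing: this follows by induction from the identity $g_{k+1}=g_k\,b_{k+1}b_{k+2}$ together with $b_k\in(-1,0)$, which holds whenever $\lambda_1(R)>2$ (the finitely many starlikes of index $\le 2$ being checked by hand). For the $\alpha$-type coverings (Type~II and Type~III(i)) the predecessor-minus-successor $b$-sum telescopes to a difference of two gaps $g_\theta-g_{\theta'}$ with $\theta<\theta'$, so strict decrease of the gaps gives the sign immediately. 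For the remaining same-length covering (Type~III(ii)) the explicit forms supplied by the proof of Theorem~\ref{transitions}, combined with the constraint $\theta\ge\xi$, reduce the $b$-sum difference to an expression that is again positive by gap-monotonicity, although the bookkeeping is heavier.

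The main obstacle will be Type~I, the covering that raises the number of legs, passing from the balanced partition of $\Omega_{n-1,r}$ to $[1,\ldots,1,n-1-r]$ in $\Omega_{n-1,r+1}$. Here the two $b$-sums run over partitions with \emph{different} numbers of parts, so the convexity-in-fixed-length argument does not apply: the successor gains $r$ favourable terms clustered near $b_1$ but also acquires one strongly negative term $b_{n-1-r}$ from its long leg, and the needed inequality holds exactly because this last term dominates. Making this precise will require a genuine quantitative estimate of how negative $b_{n-1-r}$ is---using its convergence to the fixed point of the recurrence $b\mapsto 1/(\lambda-b)$---and weighing it against the bounded gain $\textstyle\sum(b_1-b_m)$. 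This delicate balance is where the real work lies.
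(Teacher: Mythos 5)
Your skeleton coincides with the paper's: reduce Theorem~\ref{mainresult} to the three covering types of Theorem~\ref{transitions}, and for each covering verify, at $\lambda=-\lambda_1$ of the tree with the longer maximal path, that the predecessor's $b$-sum exceeds the successor's (Theorems~\ref{formulacao} and~\ref{maintool}). Your gap identity $g_{k+1}=g_k\,b_{k+1}b_{k+2}$ is correct, and it does dispatch Type~II and Type~III(i) whenever all $b_k\in(-1,0)$. But the proposal has genuine gaps. The largest is Type~I, which you explicitly leave open (``where the real work lies''): in fact no quantitative estimate of $b_{n-1-r}$ near the fixed point is needed. The paper's Proposition~\ref{mono_r_r+1} interposes the path-regular starlike $T''=[n-1-r,\ldots,n-1-r]$ on possibly \emph{more} than $n$ vertices; $\lambda_1(T)\le\lambda_1(T'')$ by subgraph monotonicity (Lemma~\ref{sub}), and the balance equation for $T''$ gives $b_{n-1-r}=\lambda/r$ \emph{exactly}, so the comparison with $[1,\ldots,1,n-1-r]$ collapses to $\lambda-r/\lambda-\lambda/r>0$, i.e.\ $\lambda^2(r-1)<r^2$, which is precisely the Lepovi\'c--Gutman bound $\lambda_1<r/\sqrt{r-1}$ (Lemma~\ref{cotamagica}). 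Without this (or an equivalent) device, your Type~I step is an unproven claim and the theorem is not established.

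Two further problems. For Type~III(ii), ``again positive by gap-monotonicity'' is not a proof, and monotonicity of the gaps alone is insufficient: for the covering $[1,4,4,4]\to[2,2,2,7]$ the required difference telescopes to $g_1-2(g_2+g_3)+(g_4+g_5+g_6)$, which is \emph{negative} for the strictly decreasing profile $g_k=(0.9)^k$; positivity holds only because the true gaps decay fast (ratio $b_{k+1}b_{k+2}$ well below $1$), which is quantitative information about $\lambda$. The paper's Lemma~\ref{inserted partition index} supplies exactly this via the fixed-point inequality $\lambda>b_\theta+1/b_\theta$ (Lemma~\ref{b_n properties}(b)), after a second subgraph insertion $T'=[m_1,\ldots,m_t,k,\theta,\ldots,\theta]$ with possibly larger vertex sum --- a two-step structure your direct comparison misses. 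Finally, your escape clause for the hypothesis $b_k\in(-1,0)$ is false as stated: there are \emph{infinitely many} starlikes of index at most $2$, namely $S(1,1,k)$ for every $k$ (Smith's $D$-diagrams), so the coverings $[1,1,n-3]\to[1,2,n-4]$ form an infinite family outside your argument; and the degeneration is real, e.g.\ at $\lambda=-\sqrt{3}$ one computes $b_2b_3=1$ (equal gaps), while for $\lambda=-1.9$ the sequence dips below $-1$ and the gaps eventually increase. Note that the paper sidesteps both of these issues for the $\alpha$-type coverings by invoking the classical path-shifting lemma (Lemmas~\ref{alpha} and~\ref{ref}), which carries no restriction on the index.
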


Recall that in section \ref{sec:order}, we discussed in detail how to order lexicographically the partitions in $\Omega_{n-1}$. More precisely, we have shown how to generate a successor partition for any partition of $\Omega_{n-1}$.  In order to achieve the monotonicity of the indices, it is sufficient to show that the index of a starlike tree given by the successor partition is larger. We are going to use the result of Theorem \ref{maintool} to compare the indices of the two starlikes. We will do this in three steps according the covering types defined above.
\begin{itemize}
  \item Monotonicity of Type I coverings. We show that the index of the starlike having largest index in $\Omega_{n-1,r}$ is smaller than the index of the starlike having smallest index in $\Omega_{n-1,r+1}$, for $r=3,\ldots,n-2$. This is done in Proposition~\ref{mono_r_r+1}.
  \item Monotonicity of Type II coverings. We next show the monotonicity of the indices in each maximal class of $\Omega_{n-1,r}$, for a fixed $ 3\geq r n-1$. This means that the operation $\alpha_{r-1,r}$ increases the index. This is going to be done in Proposition~\ref{mono_alpha}.
  \item Monotonicity of Type III coverings. Here we show that the index of the starlike having largest index in a maximal class of $\Omega_{n-1,r}$ is smaller than the index of the tree having smallest index of the next maximal class. This is done in Proposition~\ref{mon_complex}.
\end{itemize}

The following lemmas are going to be necessary to prove Proposition \ref{mono_r_r+1}.
A well known result is the following.

\begin{lema}\label{sub}
  Let $G$ be a connected graph. If $H$ is a proper subgraph of $G$ then $\lambda_1(H) <\lambda_1(G)$.
\end{lema}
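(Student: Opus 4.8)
The plan is to deduce the strict eigenvalue inequality from the Perron--Frobenius theorem, exploiting that a connected graph has an irreducible adjacency matrix. First I would reduce the two cases of ``proper subgraph'' (deleting edges, or deleting vertices together with their incident edges) to a single entrywise matrix comparison on the common vertex set $V(G)$. Given $H \subsetneq G$, define the symmetric nonnegative matrix $B$ indexed by $V(G)$ with $B_{ij}=1$ exactly when $ij$ is an edge of $H$, and $B_{ij}=0$ otherwise. By construction $B$ is block diagonal: one block equals $A(H)$ and the remaining block, over the vertices of $G$ not in $H$, is zero. Since the largest eigenvalue of any adjacency matrix is nonnegative, this yields $\lambda_1(B)=\lambda_1(H)$. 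Moreover $0 \le B \le A(G)$ entrywise, and $B \ne A(G)$ because $H$ is a proper subgraph: either some edge of $G$ is missing from $H$, or a deleted vertex has a neighbor in $G$ (such a neighbor exists since $G$ is connected), producing an entry where $A(G)_{ij}=1$ but $B_{ij}=0$.

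With this reduction in place, the weak inequality $\lambda_1(B) \le \lambda_1(G)$ is immediate from a Rayleigh-quotient argument. Choosing a nonnegative unit eigenvector $y$ of the symmetric matrix $B$ associated with $\lambda_1(B)$, the entrywise domination $B \le A(G)$ together with $y \ge 0$ gives $y^\top B y \le y^\top A(G) y$, so that $\lambda_1(B) = y^\top B y \le y^\top A(G) y \le \lambda_1(G)$, the last step being the variational characterization of $\lambda_1(G)$.

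The crux is the strict inequality, and here I would invoke connectedness. Since $A(G)$ is irreducible and nonnegative, Perron--Frobenius guarantees that every top eigenvector of $A(G)$ is a scalar multiple of a strictly positive vector. Suppose, for contradiction, that $\lambda_1(B)=\lambda_1(G)$. Then both inequalities in the chain above are equalities. Equality in the second forces the unit vector $y$ to be a top eigenvector of $A(G)$, hence $y>0$ in every coordinate. Equality in the first reads $y^\top(A(G)-B)y = \sum_{i,j}(A(G)-B)_{ij}\,y_i y_j = 0$; as $A(G)-B \ge 0$ and each product $y_i y_j$ is strictly positive, every term must vanish, forcing $B=A(G)$, which contradicts that $H$ is a proper subgraph.

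I expect the main obstacle to be the strict inequality rather than the weak one: the weak bound follows from monotonicity of the spectral radius under entrywise domination, but strictness genuinely requires the connectedness of $G$, equivalently the irreducibility of $A(G)$ and the resulting strict positivity of its Perron eigenvector. A reader preferring brevity could instead cite the standard Perron--Frobenius monotonicity statement --- if $0 \le B \le A$ with $A$ irreducible and $B \ne A$ then $\rho(B) < \rho(A)$ --- applied to $A = A(G)$; I would note this as the one-line route, since the spectral radius of a symmetric nonnegative matrix coincides with its largest eigenvalue.
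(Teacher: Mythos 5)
Your proof is correct. The paper states Lemma~\ref{sub} without any proof, simply calling it ``a well known result'' (it is the classical strict monotonicity of the index under proper subgraphs of a connected graph), and your argument --- reducing both kinds of proper subgraph to an entrywise comparison $0 \le B \le A(G)$ with $B \ne A(G)$, getting the weak inequality from the Rayleigh quotient with a nonnegative top eigenvector of $B$, and getting strictness from irreducibility of $A(G)$ and the positivity of its Perron eigenvector --- is precisely the standard Perron--Frobenius proof of that fact; the only step you use silently, the existence of a nonnegative top eigenvector of the symmetric nonnegative matrix $B$, is itself standard (replace any top eigenvector $y$ by $|y|$ and note equality in the Rayleigh quotient).
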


The following bound is due to Lepovi\'c and Gutman \cite{Lepovic2001} (see also \cite{Patuzzi}).
\begin{lema}\label{cotamagica}
Let $T$ be a starlike with $r$ paths. Then the index of $\lambda_1(T)$ satisfies
$$\lambda_1(T) < \frac{r}{\sqrt{r-1}}.$$
\end{lema}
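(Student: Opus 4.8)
The bound $r/\sqrt{r-1}$ is exactly the spectral radius of the infinite $r$-legged spider, so the natural plan is to build a test vector from the Perron vector of that limiting object and feed it into a Perron--Frobenius comparison. Concretely, I set $t = 1/\sqrt{r-1}$ and $\mu = rt = r/\sqrt{r-1}$; these satisfy the two identities $t + 1/t = \mu$ (equivalently $t^2 = \tfrac{1}{r-1}$) and $\mu t = r/(r-1) > 1$, which are the only facts about $t$ and $\mu$ I will use.

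First I would define a positive vector $x$ on $T = S(y_1,\dots,y_r)$ by putting $x_c = 1$ at the central vertex $c$ and $x_v = t^{j}$ at the vertex lying at distance $j$ from $c$ along any leg, for $1 \le j \le y_i$. I then verify $A(T)x \le \mu x$ coordinatewise. At an internal path vertex at distance $j$ the two neighbours contribute $t^{j-1} + t^{j+1} = (1/t + t)\, t^{j} = \mu t^{j}$, so equality holds; at the centre the $r$ neighbours each contribute $t$, giving $rt = \mu = \mu x_c$, again equality. At a leaf (distance $y_i$) the single neighbour contributes $t^{y_i-1}$, and since $\mu t > 1$ we get $t^{y_i-1} < \mu t^{y_i} = \mu x_v$, a strict inequality. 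Thus $A(T)x \le \mu x$ with strict inequality at every one of the $r \ge 3$ leaves.

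To convert this into a bound on $\lambda_1(T)$ I would invoke the standard supereigenvector argument: letting $y > 0$ be the Perron eigenvector with $A(T)y = \lambda_1(T)\,y$, symmetry of $A(T)$ gives $\lambda_1(T)\langle x, y\rangle = \langle A(T)x, y\rangle \le \mu \langle x, y\rangle$, and since $y > 0$ while $A(T)x$ falls strictly below $\mu x$ at the leaves, the middle inequality is strict; dividing by $\langle x, y\rangle > 0$ yields $\lambda_1(T) < \mu = r/\sqrt{r-1}$.

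The computations are routine once the decay rate $t = 1/\sqrt{r-1}$ is in hand, so the only real content is guessing this extremal vector; equivalently, one may phrase the whole argument as observing that every finite $S(y_1,\dots,y_r)$ is a proper subgraph of the infinite $r$-spider and appealing to Lemma \ref{sub} in its limiting form, but the explicit test vector avoids any limiting argument and makes the strictness transparent. The main thing to watch is the leaf case $y_i = 1$, where the leaf is adjacent to $c$: here the neighbour value is $x_c = 1 = t^{0} = t^{y_i-1}$, so the same inequality $t^{y_i-1} < \mu t^{y_i}$ applies and no separate case is needed.
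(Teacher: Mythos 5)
Your proof is correct, but note that the paper itself gives no proof of this lemma: it is quoted verbatim from Lepovi\'c and Gutman \cite{Lepovic2001} (see also \cite{Patuzzi}), so there is no internal argument to compare against, and your write-up actually makes the paper self-contained at this point. Your computations check out: with $t=1/\sqrt{r-1}$ and $\mu=rt$ one indeed has $t+1/t=\mu$ and $\mu t=\tfrac{r}{r-1}>1$, so $A(T)x\le \mu x$ holds with equality at the centre and at internal path vertices and strictly at each leaf (including the $y_i=1$ case, which you rightly flag as needing no separate treatment since $x_c=1=t^{0}$); pairing against the strictly positive Perron vector $y$ of the connected graph $T$ then gives $\lambda_1(T)\langle x,y\rangle=\langle A(T)x,y\rangle<\mu\langle x,y\rangle$, hence $\lambda_1(T)<r/\sqrt{r-1}$. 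This supereigenvector argument is genuinely different in flavour from the route suggested by the present paper's machinery, where one would more naturally derive the bound from the recurrence of Theorem \ref{formulacao}: at $\lambda=-\lambda_1(T)$ the fixed-point analysis of $b_{k+1}=1/(\lambda-b_k)$ shows each $b_{y_i}>-1/\sqrt{r-1}$ is impossible unless $|\lambda|<r/\sqrt{r-1}$, essentially the continued-fraction viewpoint behind Lemma \ref{b_n properties}. What your approach buys is transparency of strictness and independence from the diagonalization formalism; what it costs is nothing, since you correctly avoid the tempting but flawed shortcut of citing Lemma \ref{sub} against the \emph{infinite} spider (that lemma is stated for finite proper subgraphs, and a limiting argument would only yield a non-strict inequality without extra work). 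One small point of care you handled implicitly: strictness in $\langle A(T)x,y\rangle\le\mu\langle x,y\rangle$ requires $y>0$ at the leaves, which holds because the Perron vector of a connected graph is strictly positive.
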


\begin{prop}[Monotonicity between $\Omega_{n-1,r}$ and $\Omega_{n-1,r+1}$] \label{mono_r_r+1}
Let $n>3$ and $2< r < n-1$ be integers. Let $T$ be the starlike given by the largest partition $[y_1, \ldots, y_r] \in \Omega_{n-1,r}$ and let $T^\prime$ be the starlike given by the smallest partition of $\Omega_{n-1,r+1}$. Then $$\lambda_1(T) < \lambda_1(T^\prime).$$
\end{prop}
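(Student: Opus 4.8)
The plan is to prove that the largest starlike in $\Omega_{n-1,r}$ has smaller index than the smallest starlike in $\Omega_{n-1,r+1}$. Here $T$ is the balanced partition with $r$ paths, and $T'=[1,\ldots,1,n-1-r]$ has $r+1$ paths. The two trees have the same number of vertices $n$ but a different number of paths, so Theorem~\ref{maintool} does not directly apply (it compares trees with the same $r$). Instead I would exploit the two bounds recorded just above: Lemma~\ref{cotamagica} gives an \emph{upper} bound on $\lambda_1(T)$ in terms of $r$, while Lemma~\ref{sub} will be used to produce a \emph{lower} bound on $\lambda_1(T')$.

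First I would bound $\lambda_1(T)$ from above. Since $T$ is a starlike with exactly $r$ paths, Lemma~\ref{cotamagica} yields
\begin{equation}\label{eq:upperT}
\lambda_1(T) < \frac{r}{\sqrt{r-1}}.
\end{equation}
Next I would bound $\lambda_1(T')$ from below by finding a convenient proper subgraph of $T'$ and invoking Lemma~\ref{sub}. The natural candidate is the star $K_{1,r+1}$: because $T'=[1,\ldots,1,n-1-r]$ has $r+1$ paths emanating from its center, it contains the star $S(1,\ldots,1)$ on $r+1$ leaves as a subgraph (take the central vertex together with the first vertex of each path). Since $n-1-r\ge 2$ exactly when $r<n-2$, this subgraph is proper, and $\lambda_1(K_{1,r+1})=\sqrt{r+1}$, so Lemma~\ref{sub} gives $\lambda_1(T')>\sqrt{r+1}$. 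Combining this with \eqref{eq:upperT}, it then suffices to verify the purely numerical inequality
\begin{equation}\label{eq:numeric}
\frac{r}{\sqrt{r-1}} \le \sqrt{r+1},
\end{equation}
which after squaring becomes $r^2 \le (r-1)(r+1)=r^2-1$, i.e. $0\le -1$ — so this crude pairing fails and the bounds must be sharpened.

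The main obstacle, therefore, is that the star subgraph is too weak a lower bound: $\sqrt{r+1}$ sits just below $r/\sqrt{r-1}$ rather than above it. To close the gap I would choose a larger proper subgraph of $T'$. Since the long path in $T'$ has $n-1-r\ge 2$ vertices, $T'$ contains the starlike $S(2,1,\ldots,1)$ (one path of length $2$ and $r$ paths of length $1$), or more generally $S(1,\ldots,1,n-1-r)$ minus one leaf from the long arm; any such proper subgraph $H$ satisfies $\lambda_1(T')>\lambda_1(H)$. I would compute $\lambda_1(H)$ for the spider $S(2,\underbrace{1,\ldots,1}_{r})$ directly (e.g.\ via the characteristic equation, using that its index solves $x^2=\,$(number of pendant paths contributions) from the algorithm of Theorem~\ref{formulacao} with $r+1$ arms, one of length $2$), and show the resulting lower bound strictly exceeds $r/\sqrt{r-1}$. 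I expect this comparison to reduce to an elementary inequality in $r$ that holds for all $r\ge 3$; verifying it for the small base case $r=3$ separately and then monotonically in $r$ is the routine but essential calculation. The delicate point is selecting a subgraph whose index is both easy to evaluate and provably larger than the Lepović–Gutman upper bound $r/\sqrt{r-1}$, so that Lemma~\ref{sub} and Lemma~\ref{cotamagica} can be chained into the desired strict inequality $\lambda_1(T)<\lambda_1(T')$.
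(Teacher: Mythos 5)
Your plan has a fatal obstruction at exactly the point you flagged as ``routine but essential'': no proper subgraph of $T'$ can have index exceeding $r/\sqrt{r-1}$, because $\lambda_1(T')$ itself is strictly below that constant. For your proposed $H=S(2,\underbrace{1,\ldots,1}_{r})$ the index is exactly computable: setting $t=\lambda_1(H)^2$, the eigenvalue equation at the center gives $t^2-(r+2)t+r=0$, so
\[
\lambda_1(H)^2=\frac{r+2+\sqrt{r^2+4}}{2},
\]
and comparing with $r^2/(r-1)$ reduces to comparing $(r-1)\sqrt{r^2+4}$ with $r^2-r+2$; since $(r-1)^2(r^2+4)=(r^2-r+2)^2-4r$, you get $\lambda_1(H)<r/\sqrt{r-1}$ once more. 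And lengthening the arm never closes the gap: for the family $S(k,\underbrace{1,\ldots,1}_{r})$ the index increases strictly in $k$ (Lemma~\ref{sub}) with limit \emph{exactly} $r/\sqrt{r-1}$ as $k\to\infty$ --- substitute the attracting fixed point $\beta=\frac{\lambda+\sqrt{\lambda^2-4}}{2}$ of $b\mapsto 1/(\lambda-b)$ into $rb_1+\beta=\lambda$ and solve to find $\lambda^2=r^2/(r-1)$. So $\lambda_1(T)<r/\sqrt{r-1}$ and $\lambda_1(T')<r/\sqrt{r-1}$ both hold: the two indices live on the same side of the Lepovi\'c--Gutman constant, and no sandwich $\lambda_1(T)<r/\sqrt{r-1}\le\lambda_1(H)<\lambda_1(T')$ can exist. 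The separation must come from a comparison that sees both trees, not from chaining two bounds through a constant. A second, smaller inaccuracy: Theorem~\ref{maintool} does not require equal numbers of arms (it allows $r\neq s$); what blocks its direct use here is the longest-arm hypothesis $z_s\le y_r$, since the long arm $n-1-r$ of $T'$ exceeds the arms of the balanced $T$, so the needed values $b_i$ are not covered by Lemma~\ref{b_n properties} for $T$.

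The paper's proof repairs precisely this. It interpolates the path-regular starlike $T''=[n-1-r,\ldots,n-1-r]$, with $r$ arms but possibly more than $n$ vertices: it contains $T$ as a subgraph, so $\lambda_1(T)\le\lambda_1(T'')$ by Lemma~\ref{sub}, and its longest arm equals that of $T'$, restoring the hypothesis of Theorem~\ref{maintool}. Running the diagonalization at $\lambda=-\lambda_1(T'')$, Theorem~\ref{formulacao} gives the exact relation $rb_{n-1-r}=\lambda$, i.e.\ $b_{n-1-r}=\lambda/r$, and the comparison $\lambda_1(T'')<\lambda_1(T')$ reduces to
\[
\lambda-\frac{r}{\lambda}-\frac{\lambda}{r}>0
\quad\Longleftrightarrow\quad
\lambda^2(r-1)-r^2<0,
\]
which is exactly where Lemma~\ref{cotamagica} enters --- applied to $T''$, whose index approaches $r/\sqrt{r-1}$ arbitrarily closely, it supplies the strict inequality with nothing to spare. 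Your instinct to use both Lemma~\ref{sub} and Lemma~\ref{cotamagica} was right, but the bound must be deployed inside the exact root-value computation of the algorithm, not as one side of a numerical sandwich; as your own failed inequality \eqref{eq:numeric} already hinted, the sandwich has no room.
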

\begin{proof}
Fixed $n$, the number of vertices and $3 \leq r \leq n-1$, the number of paths of the starlike, Theorem \ref{transitions} states that the partition giving the largest index in $\Omega_{n-1,r}$ is the balanced one. That means (essentially) that each path has the same number of vertices. More precisely, each path has at least $m=\lfloor \frac{n-1}{r}\rfloor$ vertices. The remaining $l=n-1-r*m$ vertices are distributed, one by one, in the paths from right to left. Its corresponding partition has the following format. $$T=[\underbrace{m,\ldots,m}_{r-l},\underbrace{m+1,\ldots,m+1}_{l}].$$

Theorem \ref{transitions} also states that the smallest configuration $T^\prime$ in $\Omega_{n-1,r+1}$  is given by
$$[\underbrace{1,\ldots,1}_{r},n-1-r].$$

In order to prove that $\lambda_1(T) < \lambda_1(T^\prime)$, we are going to prove something stronger: We are going to use an intermediate starlike that is path-regular: all paths have the same number of vertices, which is $m=\lceil \frac{n-1}{r} \rceil$. We observe that this starlike has (possibly) more than $n$ vertices, but $T$ is a subgraph of it. By Lemma \ref{sub}, its index is at least as large. We will show that this path-regular starlike has smaller index than next starlike with $r+1$ paths and $n$ vertices. More generally, we show
{\footnotesize $$ [\underbrace{m,\ldots,m}_{r-l},\underbrace{m+1,\ldots,m+1}_{l}]\prec
[\underbrace{n-1-r,\ldots,n-1-r}_{r}] \prec [\underbrace{1,\ldots,1}_{r},n-1-r].$$}

The first inequality follows from Lemma \ref{sub}. Given the starlike $T^{\prime\prime}$ with partition $[n-1-r,\ldots,n-1-r]$, by Theorem \ref{ref} we know that there is a $\lambda$ such that $rb_{n-1-r}=\lambda$, where $\lambda=-\lambda_1(T^{\prime\prime})$. We want to show that the starlike  $T^\prime=[1,\ldots,1,n-1-r]$ has a larger index than that of $T^{\prime\prime}$. Following Theorem \ref{maintool}, this means to show
$$ \lambda - rb_1 -b_{n-1-r} >0.$$
But this is equivalent to
$$ \lambda -r \frac{1}{\lambda} - \frac{\lambda}{r} > 0$$
or $$\frac{r\lambda^2-r^2-\lambda^2}{r\lambda}>0$$
or, since $\lambda <0$
$$r\lambda^2-r^2-\lambda^2 = \lambda^2(r-1)-r^2 < 0.$$

But this means $\lambda > - \frac{r}{\sqrt{r-1}}$, which is exactly the condition that holds for $\lambda$ by Lemma \ref{cotamagica}, so the result follows.
\end{proof}
\begin{exemplo}
 With $n=14$ vertices and $r=3$ paths, the starlike having largest index is given by the partition $[4,4,5]$. Its successor is the starlike $[1,1,1,10]$. Since $n-1-r= 13-3=10$, the intermediate starlike is $[10,10,10]$ and we have $[4,4,5] \prec [10,10,10] \prec [1,1,1,10]$.
\end{exemplo}
Following \cite{Lin2006} we have the following operation.
\begin{lema}\label{alpha}
\cite{Cve97,Lin2006}. Let $u$ be a vertex of a non-trivial connected graph
$G$, and let $G^0_{k,l}$ denote the graph obtained from $G$ by adding pendant
paths of length $k$ and $l$ at $u$. If $k \geq l \geq 1$, then
$\lambda_1(G^0_{k,l})> \lambda_1(G^0_{k+1,l-1})$.
\end{lema}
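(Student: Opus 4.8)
The plan is to work with characteristic polynomials rather than with the Rayleigh quotient directly, since the difference of the two characteristic polynomials factors remarkably cleanly. Write $\phi(H,x)=\det(xI-A(H))$ for the characteristic polynomial of a graph $H$, let $\psi_m(x)=\phi(P_m,x)$ be the characteristic polynomial of the path on $m$ vertices (so $\psi_0=1$, $\psi_{-1}=0$ and $\psi_m=x\psi_{m-1}-\psi_{m-2}$), and abbreviate $\phi:=\phi(G,x)$ and $\bar\phi:=\phi(G-u,x)$. First I would record, by repeated application of Schwenk's deletion formula at the free ends of the two pendant paths attached at $u$, the identity
\begin{equation*}
\phi(G^0_{k,l},x)=\psi_k\psi_l\,\phi-(\psi_k\psi_{l-1}+\psi_{k-1}\psi_l)\,\bar\phi .
\end{equation*}
Subtracting the analogous expression for $G^0_{k+1,l-1}$ and using the Chebyshev-type identity $\psi_a\psi_b-\psi_{a+1}\psi_{b-1}=\psi_{a-b}$ (valid for $a\ge b$ and itself proved directly from the $\psi$ recurrence), both the $\phi$-coefficient and the $\bar\phi$-coefficient collapse, yielding the key factorization
\begin{equation*}
\phi(G^0_{k,l},x)-\phi(G^0_{k+1,l-1},x)=\psi_{k-l}(x)\,\bigl[\phi(G,x)-x\,\phi(G-u,x)\bigr].
\end{equation*}

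Next I would fix $\rho:=\lambda_1(G^0_{k,l})$ and show that the two factors have determined signs for every $x\ge\rho$. For $\psi_{k-l}$: since $G^0_{k,l}$ contains the path on $k+l+1$ vertices obtained by traversing one pendant path, then $u$, then the other, Lemma~\ref{sub} gives $\rho>\lambda_1(P_{k+l+1})>\lambda_1(P_{k-l})$, so $\psi_{k-l}(x)>0$ for all $x\ge\rho$. For the bracket $F(x):=\phi(G,x)-x\,\phi(G-u,x)$, Schwenk's formula applied at $u$ rewrites it as $F(x)=-\sum_{w\sim u}\phi(G-u-w,x)-2\sum_{Z\ni u}\phi(G-V(Z),x)$, the sums running over neighbours $w$ of $u$ in $G$ and cycles $Z$ through $u$. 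Because $\rho$ strictly exceeds the index of $G$, and hence of every proper subgraph appearing in these sums, each such characteristic polynomial is strictly positive at any $x\ge\rho$; since $G$ is non-trivial and connected, $u$ has at least one neighbour, so the first sum is nonempty and $F(x)<0$ for all $x\ge\rho$.

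Combining the two signs gives $\phi(G^0_{k,l},x)<\phi(G^0_{k+1,l-1},x)$ for every $x\ge\rho$. As $\phi(G^0_{k,l},x)\ge 0$ on $[\rho,\infty)$ (with equality only at the simple Perron root $\rho$), this forces $\phi(G^0_{k+1,l-1},x)>0$ for all $x\ge\rho$, i.e.\ $G^0_{k+1,l-1}$ has no eigenvalue $\ge\rho$, which is exactly $\lambda_1(G^0_{k+1,l-1})<\rho=\lambda_1(G^0_{k,l})$. The main obstacle, and the only genuinely delicate step, is the factorization itself: carrying out the Schwenk recursion to obtain the first displayed identity and then verifying $\psi_a\psi_b-\psi_{a+1}\psi_{b-1}=\psi_{a-b}$ so that the two messy coefficients telescope into the single clean factor $\psi_{k-l}$. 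Once that is in hand, the sign analysis via subgraph monotonicity (Lemma~\ref{sub}) and Schwenk's formula at $u$ is routine.

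As an alternative route I would keep in mind the Perron-eigenvector ``leaf-shifting'' surgery: take the positive eigenvector of $G^0_{k+1,l-1}$, transplant the end vertex of the longer pendant path onto the end of the shorter one to obtain a test vector on $G^0_{k,l}$, and compare Rayleigh quotients. That comparison reduces to showing that a certain eigenvector entry on the shorter path exceeds one on the longer path, i.e.\ to an inequality among entries at \emph{unequal} depths that, expressed through the $\psi_m$, is more delicate to control near spectral radius $2$. For this reason I would prefer the characteristic-polynomial computation above, where the clean cancellation makes the sign argument transparent.
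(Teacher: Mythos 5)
The paper never proves this lemma: it is the classical Li--Feng path-shifting lemma, imported as a black box with the citation \cite{Cve97,Lin2006}, so there is no internal proof to compare against. Your blind proof is correct, and it is in substance the standard characteristic-polynomial argument from that literature. I checked the two computational pivots. The attachment formula $\phi(G^0_{k,l},x)=\psi_k\psi_l\,\phi(G,x)-(\psi_k\psi_{l-1}+\psi_{k-1}\psi_l)\,\phi(G-u,x)$ follows by attaching the two paths one at a time via $\phi=\psi_m\phi(G,x)-\psi_{m-1}\phi(G-u,x)$, using that deleting $u$ after the first attachment leaves the disjoint union $(G-u)\cup P_l$. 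Subtracting the analogous expression for $G^0_{k+1,l-1}$, the $\phi(G,x)$-coefficient is $\psi_k\psi_l-\psi_{k+1}\psi_{l-1}=\psi_{k-l}$ and the $\phi(G-u,x)$-coefficient reduces to $\psi_{k-1}\psi_l-\psi_{k+1}\psi_{l-2}=x\,\psi_{k-l}$; both are instances of the product-difference identity for $\psi_m(2\cos\theta)=\sin((m+1)\theta)/\sin\theta$, so your factorization $\psi_{k-l}(x)\bigl[\phi(G,x)-x\,\phi(G-u,x)\bigr]$ is right (it uses $l\geq 1$, which the hypothesis provides, so that $\psi_{l-2}$ with $\psi_{-1}=0$ covers the boundary case $l=1$). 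The sign analysis is also sound: Schwenk's vertex formula at $u$ gives the stated expression for the bracket, every summand $\phi(G-u-w,x)$ and $\phi(G-V(Z),x)$ is strictly positive at $x\geq\rho$ because the deleted subgraph has index at most $\lambda_1(G)<\rho$ (Lemma \ref{sub} applies since $k,l\geq 1$ makes $G$ a proper subgraph of the connected graph $G^0_{k,l}$), and the neighbour sum is nonempty since $G$ is non-trivial and connected. One trivial edge case to phrase carefully: when $k=l$ your chain $\rho>\lambda_1(P_{k+l+1})>\lambda_1(P_{k-l})$ degenerates, but then $\psi_0\equiv 1>0$ anyway, and for $k>l$ the chain is fine via $P_{k-l}\subsetneq P_{k+l+1}\subsetneq G^0_{k,l}$. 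The endgame is airtight: $\rho$ is the largest (simple) root of $\phi(G^0_{k,l},\cdot)$, so positivity of $\phi(G^0_{k+1,l-1},\cdot)$ on all of $[\rho,\infty)$ forces $\lambda_1(G^0_{k+1,l-1})<\rho$. Your reason for setting aside the Perron-eigenvector surgery (comparing eigenvector entries at unequal depths along the two paths) correctly identifies the delicate point of that alternative route.
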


The transformation in Lemma \ref{alpha} from $G^0_{k,l}$ to $G^0_{k+1,l-1}$ is called the $\alpha_0$ transformation of $G^0_{k,l}$.

Consider a starlike $[y_1,y_2,\ldots,y_r]$. Since $y_i \leq y_{j}$, for all $i < j$, we consider applying an $\alpha_0$ transformation to $[y_1,\ldots, y_i,\ldots, y_{j},\ldots,y_r]$ obtaining $[y_1,\ldots, y_{i}+1,\ldots, y_{j}-1,\ldots,y_r]$. That is, we make a longer path shorter (by one) and a shorter path even shorter by one. According to Lemma \ref{alpha}, the index increases. We notice that this operations corresponds to the $\alpha$ operation defined in section \ref{sec:order} and state the following result for reference.

\begin{lema}\label{ref} Let $0< y_1 \leq \cdots \leq y_r $ be integers and $[y_1,\ldots,y_r]$ be a starlike tree. Then
$$\lambda_1([y_1,\ldots, y_i,\ldots,y_{j},\ldots,y_r]) < \lambda_1([y_1,\ldots, y_{i}+1,\ldots, y_{j}-1,\ldots,y_r]).$$
\end{lema}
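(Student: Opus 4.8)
The plan is to derive this directly from Lemma~\ref{alpha}, recognizing the stated transformation as nothing but a balancing move of the type controlled by the $\alpha_0$ transformation, applied to a suitably chosen ambient graph. First I would peel off the two paths that are being modified. Write the starlike $[y_1,\ldots,y_r]$ as the graph obtained from a fixed base graph $G$ by attaching two pendant paths at its distinguished vertex $u$, where $G$ consists of the center $c$ together with the $r-2$ paths other than the $i$-th and $j$-th, and $u=c$. Since the starlike has $r\geq 3$ paths, removing two of them leaves $c$ together with at least one path of length at least $1$, so $G$ is a non-trivial connected graph and Lemma~\ref{alpha} is applicable to it.

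Next I would match the parameters. With $G$ as above, the partition $[y_1,\ldots,y_i,\ldots,y_j,\ldots,y_r]$ is exactly $G^0_{y_j,y_i}$, the graph $G$ with pendant paths of lengths $y_j$ and $y_i$ attached at $u$, while $[y_1,\ldots,y_i+1,\ldots,y_j-1,\ldots,y_r]$ is $G^0_{y_j-1,\,y_i+1}$. Setting $k=y_j-1$ and $l=y_i+1$, these two graphs become $G^0_{k+1,l-1}$ and $G^0_{k,l}$ respectively. I would then check the hypotheses of Lemma~\ref{alpha}: the bound $l=y_i+1\geq 1$ is immediate, and $k\geq l$ amounts to $y_j\geq y_i+2$. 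Under this condition Lemma~\ref{alpha} yields $\lambda_1(G^0_{k,l})>\lambda_1(G^0_{k+1,l-1})$, which reads precisely as $\lambda_1([y_1,\ldots,y_i+1,\ldots,y_j-1,\ldots,y_r])>\lambda_1([y_1,\ldots,y_i,\ldots,y_j,\ldots,y_r])$, the desired inequality.

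The one point requiring care — and really the only obstacle — is the boundary behaviour when $y_j-y_i\leq 1$. If $y_i=y_j$ the move $\{y_i,y_j\}\mapsto\{y_i+1,y_j-1\}$ \emph{unbalances} the two paths, so $k<l$ and the inequality in fact reverses; and if $y_j=y_i+1$ the resulting multiset $\{y_i+1,y_i\}$ coincides with the original $\{y_i,y_i+1\}$, so the two starlikes are isomorphic and share the same index. Thus the strict inequality is meaningful exactly when the two paths differ by at least two, i.e. $y_j\geq y_i+2$, which is precisely the regime in which the $\alpha$ operation of Section~\ref{sec:order} produces a genuinely larger and distinct partition, and it is exactly this hypothesis that secures $k\geq l$. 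I would therefore record the lemma with that understanding and conclude with the single invocation of Lemma~\ref{alpha} described above.
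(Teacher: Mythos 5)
Your proof is correct and follows essentially the same route as the paper: the paper also obtains Lemma~\ref{ref} as a direct instance of Lemma~\ref{alpha}, taking $G$ to be the center together with the $r-2$ untouched paths, $u$ the center, and the $i$-th and $j$-th paths as the two pendant paths. Your extra care is warranted and even sharpens the paper's statement: checking that $r\geq 3$ makes $G$ non-trivial, and noting that the strict inequality genuinely requires $y_j \geq y_i+2$ (for $y_j=y_i$ the move unbalances and the inequality reverses; for $y_j=y_i+1$ the two starlikes are isomorphic), a boundary restriction the paper leaves implicit in its use of the $\alpha$ operation.
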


\begin{prop}[Monotonicity inside maximal classes] \label{mono_alpha}
  Let $[y_1, \ldots, y_r]$ be a partition in a maximal class of $\Omega_{n-1,r}$ then
  $$\lambda_1(\alpha_{r-1, r}^{i-1}([y_1, \ldots, y_r])) < \cdots < \lambda_1(\alpha_{r-1, r}^{i}([y_1, \ldots, y_r])),$$
  for $i=1,\ldots,t$, where $t$ is the biggest power in the orbit.
\end{prop}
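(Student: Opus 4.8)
The plan is to reduce the entire statement to Lemma~\ref{ref} (equivalently, to the $\alpha_0$ transformation of Lemma~\ref{alpha}). The displayed chain of strict inequalities is nothing more than the concatenation of the one-step comparisons $\lambda_1(\alpha_{r-1,r}^{k-1}([y_1,\ldots,y_r])) < \lambda_1(\alpha_{r-1,r}^{k}([y_1,\ldots,y_r]))$ for $k=1,\ldots,t$, so it suffices to prove a single such step and then argue by induction on the exponent (or simply by transitivity of $<$).

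For one step I would abbreviate $[w_1,\ldots,w_r]:=\alpha_{r-1,r}^{k-1}([y_1,\ldots,y_r])$, the current partition in the orbit. By the very definition of $\alpha_{r-1,r}$, its image replaces $w_{r-1}$ by $w_{r-1}+1$ and $w_r$ by $w_r-1$ while fixing all other entries; this is exactly the transformation of Lemma~\ref{ref} taken with $i=r-1$ and $j=r$. That lemma then yields
\[
  \lambda_1([w_1,\ldots,w_{r-1},w_r]) < \lambda_1([w_1,\ldots,w_{r-1}+1,w_r-1]),
\]
which is precisely $\lambda_1(\alpha_{r-1,r}^{k-1}([y_1,\ldots,y_r])) < \lambda_1(\alpha_{r-1,r}^{k}([y_1,\ldots,y_r]))$. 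Chaining these for $k=1,\ldots,t$ gives the asserted increasing sequence of indices.

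The only bookkeeping point is to confirm that $\alpha_{r-1,r}$ is legitimately applicable at each step, i.e. that each iterate $[w_1,\ldots,w_r]$ still has $w_{r-1}+1\le w_r$ so that its image remains a sorted partition of $\Omega_{n-1,r}$. This is guaranteed by the construction of the maximal class as the orbit of its minimal partition: the iteration of $\alpha_{r-1,r}$ is carried out only until $z_{r-1}=z_r$ or $z_{r-1}+1=z_r$, so for each $k<t$ the gap $w_r-w_{r-1}$ is at least $2$ and the operation is valid. (In any case the index of the underlying tree does not depend on the order in which its paths are listed, so Lemma~\ref{ref} would apply even without sortedness.) I do not anticipate a genuine obstacle here: once $\alpha_{r-1,r}$ is recognized as the $i=r-1$, $j=r$ instance of Lemma~\ref{ref}, the proposition follows immediately together with a routine induction.
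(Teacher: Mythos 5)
Your proposal is correct and is essentially the paper's own argument: the paper's proof likewise observes that one application of $\alpha_{r-1,r}$ is the $i=r-1$, $j=r$ instance of Lemma~\ref{ref} and concludes the chain of strict inequalities from there. Your additional check that each iterate remains a valid sorted partition (guaranteed by the stopping rule defining the orbit) is a sensible bookkeeping step the paper leaves implicit.
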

\begin{proof}
  From Lemma~\ref{ref} we can easily conclude that
  $$\lambda_1(\alpha_{r-1, r}^{i-1}([y_1, \ldots, y_r]))  \prec \lambda_1(\alpha_{r-1, r}^{i}([y_1, \ldots, y_r])),$$
  as a particular case, so the proof is done.
\end{proof}

In order to prove the next key proposition we need a technical result regarding the index  of trees associated to a certain pair of partitions.

\begin{lema}\label{inserted partition index} Let $T'=[m_1,\ldots,m_t, k,\theta,\ldots,\theta]  \in \Omega_{n'-1,r}$ and $T''= [m_1,\ldots, m_t, k + 1,  k + 1 , \ldots, k + 1, \theta] \in \Omega_{n-1,r}$ where $n'\geq n$ then $\lambda_1(T')< \lambda_1(T'').$
\end{lema}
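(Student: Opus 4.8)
The plan is to run the comparison of Theorem~\ref{maintool} with $T'$ playing the role of the smaller-index tree, so I set $\lambda=-\lambda_1(T')$ and let $\{b_i\}$ be the sequence (\ref{bis1}) attached to this $\lambda$. Write $q:=r-t-1$ for the common multiplicity: $T'$ carries $q$ copies of $\theta$ and $T''$ carries $q$ copies of $k+1$. Since both partitions are nondecreasing their largest part is $\theta$, so the hypothesis $z_s\le y_r$ holds with equality and every diagonal value produced by \texttt{Diagonalize} on $T''$ at $\lambda$ is one of $b_1,\dots,b_\theta$, hence negative by Lemma~\ref{b_n properties}(a). The unequal vertex counts $n'\ge n$ are harmless for the direction I need: a positive root value already forces, through Lemma~\ref{mainA}(a), an eigenvalue of $T''$ above $\lambda_1(T')$. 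Thus it suffices to show that the root value $L:=\lambda-\sum_{T''}b_{z_i}$ is positive.

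First I would simplify $L$ using the defining identity (\ref{bis}) for $T'$, namely $\sum_i b_{m_i}+b_k+q\,b_\theta=\lambda$. Subtracting the $T''$-sum $\sum_i b_{m_i}+q\,b_{k+1}+b_\theta$ cancels the common prefix and yields the clean expression
\begin{equation*}
L=(b_k-b_\theta)+q\,(b_\theta-b_{k+1})=d_k-(q-1)\,(b_{k+1}-b_\theta),
\end{equation*}
where $d_j:=b_j-b_{j+1}>0$ are the consecutive gaps. Comparing the two partition sums turns the hypothesis $n'\ge n$ into $(q-1)(\theta-k)\ge q$; in particular $q\ge2$ and $\theta\ge k+2$, so $b_{k+1}-b_\theta=\sum_{j=k+1}^{\theta-1}d_j$ is a genuine nonnegative sum and the target inequality reads $d_k>(q-1)\sum_{j=k+1}^{\theta-1}d_j$.

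The engine for this is the exact gap recursion coming from (\ref{bis1}): a short computation gives $d_{j+1}=(b_{j+1}b_{j+2})\,d_j$, and I would establish $0<b_{j+1}b_{j+2}<1$ for every $j<\theta$, so that the gaps strictly decrease and $d_{k+i}\le\rho^{\,i}d_k$ with $\rho:=b_{\theta-1}b_\theta$ the largest ratio in range. The bound $b_{j+1}b_{j+2}<1$ is equivalent to $b_{j+2}>2/\lambda$, which for $\lambda_1(T')\ge2$ follows because the sequence stays above the attracting fixed point $b^\ast=\tfrac12(\lambda+\sqrt{\lambda^2-4})$ and $b^\ast\ge2/\lambda$ (using $\lambda_1(T')\ge\sqrt3$, as every starlike with $r\ge3$ contains $K_{1,3}$). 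Feeding this into the sum reduces the claim to the coupled estimate $(q-1)\sum_{i=1}^{\theta-k-1}\rho^{\,i}<1$, to be discharged against the size constraint $(q-1)(\theta-k)\ge q$.

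I expect this last, \emph{coupled} estimate to be the main obstacle. The naive ``gaps decrease'' bound is far too weak (it only delivers $L>0$ when $(q-1)(\theta-k-1)<1$), and even the infinite-series simplification $\rho/(1-\rho)$ fails: for $[1,4,4]\to[2,2,4]$ one finds $\rho\approx0.556>1/q$, yet $L>0$ survives precisely because the \emph{finite} geometric sum $\rho+\rho^2$ stays below $1$. So the decay must be quantified exactly and played against the multiplicity $q$, with the constraint $(q-1)(\theta-k)\ge q$ used essentially and with no slack at the boundary $n'=n$ (witness $[1,3,3]\to[2,2,3]$, where $L=1/12$). I also anticipate that the finitely many low-index starlikes with $\lambda_1<2$ (the affine/Dynkin types $D_n,E_6,E_7,E_8$), for which $b^\ast$ is not real, will require a separate finite-range verification of $b_{j+1}b_{j+2}<1$ in place of the ``stay above $b^\ast$'' argument.
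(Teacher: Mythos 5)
Your setup is sound and, up to the computation of the root value, coincides with the paper's: you take $\lambda=-\lambda_1(T')$, correctly note that Theorem~\ref{maintool} applies (the largest parts agree, and $n'\ge n$ is indeed harmless for the direction needed), and your simplification $L=d_k-(q-1)(b_{k+1}-b_\theta)$ is an exact consequence of identity~(\ref{bis}) for $T'$; the gap recursion $d_{j+1}=b_{j+1}b_{j+2}\,d_j$ and the arithmetic consequence $(q-1)(\theta-k)\ge q$ of $n'\ge n$ are also correct. But the proof has a genuine hole exactly where you flag it: the ``coupled estimate'' $(q-1)\sum_{i=1}^{\theta-k-1}\rho^{\,i}<1$ is never established, and your own experiments show that both the monotone-gaps bound and the infinite geometric series are too weak. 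What you submit is therefore a reduction, not a proof, and it is not clear the finite geometric sum can be played against $(q-1)(\theta-k)\ge q$ uniformly; on top of that you would still owe the separate finite verification of $b_{j+1}b_{j+2}<1$ for the starlikes with $\lambda_1<2$.

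The missing idea --- and the paper's actual argument --- is to avoid estimating gap decay altogether. Since $\lambda=\Sigma+b_k+q\,b_\theta$ holds exactly, where $\Sigma=b_{m_1}+\cdots+b_{m_t}\le 0$, one may solve for the multiplicity, $q=(\lambda-\Sigma-b_k)/b_\theta$, substitute this into the $T''$-sum, and use the recurrence identity $(\lambda-b_k)\,b_{k+1}=1$ to collapse everything:
$$\Sigma+q\,b_{k+1}+b_\theta=\Bigl(1-\frac{b_{k+1}}{b_\theta}\Bigr)\Sigma+b_\theta+\frac{1}{b_\theta}<\lambda,$$
because $0<b_{k+1}/b_\theta\le 1$ (both values negative and the sequence decreasing, with $k+1\le\theta$) makes the first term nonpositive, while $b_\theta+1/b_\theta<\lambda$ is exactly Lemma~\ref{b_n properties}(b); the case $t=0$ is the same computation with $\Sigma=0$. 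This one-line cancellation needs no decay rate $\rho$, no case split on whether $\lambda_1\ge 2$, and not even the constraint $(q-1)(\theta-k)\ge q$: the integrality of $q$ is irrelevant once it is eliminated through the eigenvalue relation. That substitution is the step your quantitative route should be replaced by.
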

\begin{proof}
  To see that, we recall that $\lambda=-\lambda_1(T')$ and consider the only two possible cases for $T'$: $t=0$ and $t>0$.\\

\noindent \textbf{\underline{The case $t = 0$.} }\\
We have $T' \prec T''$, where
$$T'=[k,  \theta, \ldots, \theta, \theta] \text{ and } T''= [k+1,  k+1, \ldots, k+1, \theta].$$
Let the sum of $b_i$ over $T'$ be $\lambda$, that is, $\lambda= b_{k} + (r-1) b_{\theta}=-\lambda_1([k,  \theta, \ldots, \theta, \theta])=-\lambda_1(T^\prime)$, by Theorem \ref{formulacao}.  The sum of $b_i$ over $T''$, is given by $(r-1) b_{k+1} +  b_{\theta}$. We need to prove that $\lambda_1(T')< \lambda_1(T'')$ and by Theorem~\ref{maintool}, this is equivalent to show
$b_{k} + (r-1) b_{\theta} - ((r-1) b_{k+1} +  b_{\theta}) >0,$
or $$(r-1) b_{k+1} +  b_{\theta} < \lambda.$$
We notice that, from the above equation,  $b_{\theta}= \frac{\lambda- b_{k}}{r-1}$ thus our inequality is equivalent to
$$(r-1) b_{k+1} +  \frac{\lambda- b_{k}}{r-1} < \lambda.$$
Also, $b_{k+1}=\frac{1}{\lambda -b_{k}}$ so
$\displaystyle (r-1) \frac{1}{\lambda -b_{k}} +  \frac{\lambda- b_{k}}{r-1} < \lambda$
is the inequality that we should prove. Using the fact that $b_{\theta}=\frac{\lambda- b_{k}}{r-1}$ we obtain
$$(r-1) b_{k+1} +  b_{\theta} < \lambda  \Leftrightarrow b_{\theta} + \frac{1}{b_{\theta}} < \lambda.$$

Finally, we recall that $b_{\theta} + \frac{1}{ b_{\theta}}< \lambda $ from Lemma~\ref{b_n properties}, part (b), which proves our claim.\\

\noindent \textbf{\underline{The case $t > 0$.} }\\ We have $T' \prec T''$, where
$$T'=[m_1,\ldots,m_t, k,  \underbrace{\theta, \ldots, \theta, \theta}_{s}]  \text{ and } T''= [m_1,\ldots,m_t,\underbrace{k+1,  k+1, \ldots, k+1}_{s}, \theta].$$

Define $s=r-t-1$ and $\lambda=\Sigma + b_{k} + s b_{\theta}=-\lambda_1([m_1,\ldots,m_t, k,  \theta, \ldots, \theta, \theta])$, by Theorem \ref{formulacao}. Considering the sum of $b_i$ over $T''$, given by $\Sigma+ s b_{k+1} +  b_{\theta}$, where $\Sigma= b_{1} +\cdots+  b_{t}$. If $\Sigma + s b_{k+1} +  b_{\theta} < \lambda= \Sigma+b_{k} + s b_{\theta}$  we obtain
$$\Sigma + b_{k} + s b_{\theta} - (\Sigma+ s b_{k+1} +  b_{\theta}) >0$$
thus $\lambda_1(T')< \lambda_1(T'')$, by Theorem~\ref{maintool}.

One more time we need to prove the inequality $\Sigma+ s b_{k+1} +  b_{\theta} < \lambda$. To do that we can isolate
$ \displaystyle s= \frac{\lambda - \Sigma -  b_{k}}{ b_{\theta}}$
so
$$\Sigma +s b_{k+1} + b_{\theta}= \Sigma +\left(\frac{\lambda - \Sigma -  b_{k}}{ b_{\theta}}\right) b_{k+1} + b_{\theta}=$$
$$= \Sigma +\left(\frac{\lambda - \Sigma -  b_{k}}{ b_{\theta}}\right) b_{k+1} + b_{\theta}=\Sigma +\left(\frac{(\lambda -  b_{k})b_{k+1}- \Sigma b_{k+1}}{ b_{\theta}}\right)  + b_{\theta}=$$
$$=\Sigma +\frac{1}{ b_{\theta}}-\Sigma \frac{ b_{k+1}}{ b_{\theta}}  + b_{\theta}= \left( 1 - \frac{ b_{k+1}}{ b_{\theta}}\right) \Sigma + b_{\theta} + \frac{1}{ b_{\theta}}< \lambda,$$
because $0< \frac{ b_{k+1}}{ b_{\theta}}<1$, $(\lambda -  b_{k})b_{k+1}=1$ and $b_{\theta} + \frac{1}{ b_{\theta}}< \lambda $ from Lemma~\ref{b_n properties}, part b).
\end{proof}

\begin{prop}[Monotonicity between classes] \label{mon_complex}
  Let $T=[y_1, \ldots, y_r] \in \Omega_{n-1,r}$ be the largest partition in a maximal class and $T''=[z_1, \ldots, z_r] \in \Omega_{n-1,r}$ be the smallest partition of the next maximal class, then
  $$\lambda_1(T)< \lambda_1(T'').$$
\end{prop}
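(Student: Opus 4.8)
The plan is to lean entirely on the classification already obtained in Theorem \ref{transitions}, which tells us the precise shape of a Type III covering $T \to T''$: either (i) $T''=\alpha_{t+1,r}(T)$, or (ii) $T=[m_1,\ldots,m_t,k,\ldots,\xi]$ and $T''=[m_1,\ldots,m_t,k+1,\ldots,k+1,\theta]$ with $\theta\geq\xi$. In case (i) there is nothing further to prove: the successor is produced by a single $\alpha$ operation, so Lemma \ref{ref} immediately gives $\lambda_1(T)<\lambda_1(T'')$. All the work is concentrated in case (ii).

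For case (ii) I would insert an intermediate starlike having \emph{more} vertices and chain together the two monotonicity tools available, namely subgraph monotonicity (Lemma \ref{sub}) and the technical comparison of Lemma \ref{inserted partition index}. By Lemma \ref{interm equil} the tail of $T$ following the entry $k$ is balanced, all its entries equalling $k+c$ or $k+c+1$ with maximum $\xi$. I would set $T^*=[m_1,\ldots,m_t,k,\underbrace{\xi,\ldots,\xi}_{r-t-1}]$, obtained by lengthening every tail path of $T$ up to $\xi$; then $T$ is a subgraph of $T^*$, so $\lambda_1(T)\leq\lambda_1(T^*)$ by Lemma \ref{sub}. Now $T^*$ has exactly the form of the left-hand tree in Lemma \ref{inserted partition index} (with its parameter $\theta$ taken to be $\xi$), so that lemma supplies the \emph{strict} inequality $\lambda_1(T^*)<\lambda_1([m_1,\ldots,m_t,k+1,\ldots,k+1,\xi])$. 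Finally, since $\theta\geq\xi$, the tree $[m_1,\ldots,m_t,k+1,\ldots,k+1,\xi]$ is a subgraph of $T''=[m_1,\ldots,m_t,k+1,\ldots,k+1,\theta]$, giving $\lambda_1([m_1,\ldots,m_t,k+1,\ldots,k+1,\xi])\leq\lambda_1(T'')$ once more by Lemma \ref{sub}. Chaining, $\lambda_1(T)\leq\lambda_1(T^*)<\lambda_1([m_1,\ldots,m_t,k+1,\ldots,k+1,\xi])\leq\lambda_1(T'')$, and the strict middle step forces the whole chain to be strict.

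The point demanding care, and the reason the argument is not merely a subgraph comparison, is that $T^*$ is \emph{not} a starlike on the same number of vertices as $T$ and $T''$: in general it has strictly more. This is exactly why Lemma \ref{inserted partition index} was stated so as to allow its two trees to live in different sets $\Omega_{n'-1,r}$ and $\Omega_{n-1,r}$ with $n'\geq n$; the comparison there is carried out through the common quantity $\lambda=-\lambda_1(T^*)$ and the sign of the root value via Theorem \ref{maintool}, rather than through equal vertex counts. The only genuine verification is bookkeeping: checking that the structural description of $T$ (balanced tail of maximum $\xi$, by Lemma \ref{interm equil}) and of $T''$ (all interior entries $k+1$, last entry $\theta\geq\xi$, by Theorem \ref{transitions}) matches the hypotheses of Lemma \ref{inserted partition index} on the nose. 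No new estimate is required, since both $t=0$ and $t>0$ are already absorbed inside Lemma \ref{inserted partition index}.
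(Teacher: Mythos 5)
Your proposal is correct and takes essentially the same approach as the paper: it uses the classification of Theorem \ref{transitions}, handles case (i) by Lemma \ref{ref}, and handles case (ii) by inserting an intermediate starlike on possibly more vertices and chaining Lemma \ref{sub} with Lemma \ref{inserted partition index}. The only deviation is cosmetic --- the paper lengthens the tail of $T$ to $\theta$ and applies Lemma \ref{inserted partition index} once, with the partition $T'=[m_1,\ldots,m_t,k,\theta,\ldots,\theta]$, whereas you lengthen only to $\xi$ and compensate with a second subgraph step from $[m_1,\ldots,m_t,k+1,\ldots,k+1,\xi]$ up to $T''$ --- and the ``bookkeeping'' you defer (the hypothesis $n'\geq n$ of Lemma \ref{inserted partition index}) does hold in your variant, since any genuine case (ii) covering forces $\xi\geq k+2$ and at least two tail entries, so that $(s-1)(\xi-k)\geq s$.
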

\begin{proof}

According to Theorem~\ref{transitions} these are the only possibilities, that is, if the covering is not Type I or Type II then
$T \rightarrow T''$
implies that \begin{itemize}
\item[(i)] either $T''=  \alpha_{t+1, r}(T)$ or
\item[(ii)]$T= [m_1,\ldots,m_t, k,\ldots,\xi]$,\\
       $T''= [m_1,\ldots,m_t, k+1,\ldots,\theta]$, with $\theta \geq \xi$.
\end{itemize}

\textbf{Case (i):} we can use Lemma~\ref{ref} to conclude that $\lambda_1(T)< \lambda_1(T'')$ (for example $[1, 3, 4, 5]\rightarrow[1, 4, 4, 4]$).\\

\textbf{Case (ii):} the property $\theta \geq \xi$ alows us to insert an intermediary partition (with possibly bigger sum) $$T'=[m_1,\ldots,m_t, k,\theta,\ldots,\theta]$$
between $T$ and $T''$ (for example $[1, 2, 5, 5]\rightarrow [1, 3, 3, 6]$ or $[1, 4, 4, 4]\rightarrow [2, 2, 2, 7]$).

By construction $\theta \geq \xi$ so the tree $T$ is a subgraph of the tree $T'$. From Lemma~\ref{sub} we conclude that $\lambda_1(T)< \lambda_1(T')$.
Finally, from Lemma~\ref{inserted partition index} we conclude that $\lambda_1(T')< \lambda_1(T'')$ completing our proof.
\end{proof}

\begin{exemplo}
  For $\Omega_{13,4}$ the typical changes between maximal classes will be something like\\
\begin{center}
\begin{minipage}[l]{5.5cm}
$\,\,T=[1, 2, 5, 5]$\\
{\color{blue}$ \,T'=[1, 2, 6, 6]\in \Omega_{15,4}$ } \\
$T''=[1, 3, 3, 6]$\\
Type III
\end{minipage}
\begin{minipage}[l]{5.5cm}
$\,\,T=[1, 4, 4, 4]$\\
{\color{blue}$\,T'=[1, 7, 7, 7] \in \Omega_{22,4}$ }\\
$T''=[2, 2, 2, 7]$\\
Type III
\end{minipage}
\begin{minipage}[l]{4cm}
$\,\,T=[1, 3, 4, 5]$\\
$ ^{ }$\\
$T''=[1, 4, 4, 4]$\\
Type III / $\alpha$
\end{minipage}
\end{center}

\end{exemplo}

\section*{Acknowledgments}
Vilmar Trevisan acknowledges the partial support of CNPq - Grants 409746/2016-9 and 303334/2016-9.
\bibliographystyle{plain}

\bibliography{refs}

\end{document}